\theoremstyle{plain}
\newtheorem{theorem}{Theorem}
\newtheorem{proposition}[theorem]{Proposition}
\newtheorem{lemma}[theorem]{Lemma}
\theoremstyle{definition}
\newtheorem{definition}[theorem]{Definition}
\newtheorem{remark}[theorem]{Remark}
\numberwithin{equation}{section}
\numberwithin{theorem}{section}
\numberwithin{equation}{section}
\newcommand{\Z}{{\mathbb{Z}}}
\newcommand{\R}{{\mathbb{R}}}
\newcommand{\T}{{\mathbb{T}}}
\newcommand{\N}{{\mathbb{N}}}
\let\Re=\undefined\DeclareMathOperator*{\Re}{Re}
\let\Im=\undefined\DeclareMathOperator*{\Im}{Im}
\def\leq{\leqslant}
\def\geq{\geqslant}
\def\le{\leqslant}
\begin{document}
\address{Mingming Deng
\newline \indent Graduate School of China Academy of Engineering Physics,  \ Beijing, \ China, \ 100089, }
\email{dengmingming18@gscaep.ac.cn}

\address{Kailong Yang
\newline \indent Chongqing National Center for Applied Mathematics, Chongqing Normal University
\newline \indent  Chongqing, China\indent }
\email{ykailong@mail.ustc.edu.cn}

\title[growth of high Sobolev norms]{On the growth of high Sobolev norms of the cubic nonlinear Schr\"odinger equation on $\R\times\T$}
%
\author{Mingming Deng and Kailong Yang}

\subjclass[2010]{35B40; 35Q55}

\date{\today}

\keywords{nonlinear Schr\"odinger equation, global well-posedness, growth of high Sobolev norms. }
\maketitle

\begin{abstract}
	We consider the cubic nonlinear Schr\"odinger equation on product manifolds $\mathbb{R}\times \mathbb{T}$. In this paper, we obtain polynomial bounds on the growth in time of high Sobolev norms of the solutions. The main ingredient of the proof is to establish an iteration bound, which is based on the idea used by Bourgain in \cite{B1}.
\end{abstract}

\maketitle

\section{Introduction}
In this article, we study the cubic nonlinear Schr\"odinger equation on product manifolds of the form $\mathbb{R}\times \mathbb{T}$:
\begin{equation}\label{eq1.1}
\begin{cases}
i\partial_t u + \Delta_{\mathbb{R}\times \mathbb{T}}  u = |u|^2u,\\
u(0) = u_0\in H^s(\mathbb{R}\times \mathbb{T}),
\end{cases}
\end{equation}
where $\Delta_{\mathbb{R}  \times \mathbb{T}} $ is the Laplace-Beltrami operator on $\mathbb{R} \times \mathbb{T}$ and $u : \mathbb{R} \times \mathbb{R}\times \mathbb{T}  \to \mathbb{C}$ is a complex-valued function.

The equation \eqref{eq1.1} is a special case of the general nonlinear Schr\"odinger equations on the waveguides $\mathbb{R}^n \times \mathbb{T}^m$:
\begin{equation}\label{eq1.2n}
\begin{cases}
i\partial_t u + \Delta_{\mathbb{R}^n \times \mathbb{T}^m}  u = |u|^{p-1} u,\\
u(0) = u_0,
\end{cases}
\end{equation}
where $1 < p < \infty$, $m,n \in \mathbb{Z}^+$, and $\T^m$ is an m-dimensional torus. This kind of equations arise as models in the study of nonlinear optics (propagation of laser beams through the atmosphere or in a plasma), especially in nonlinear optics of telecommunications \cite{S,SL}.

 The equation \eqref{eq1.2n} has the following conserved quantities:
\begin{align}
\text{mass: }    &\quad   M(u(t))  = \int_{\mathbb{R}^n \times \mathbb{T}^m} |u(t,x)|^2\,\mathrm{d}x,\\
\text{   energy:  }     &  \quad E(u(t))  = \int_{\mathbb{R}^n \times \mathbb{T}^m} \frac12 |\nabla u(t,x)|^2  + \frac1{p+1} |u(t,x)|^{p+1} \,\mathrm{d}x.
\end{align}

Recently, there are wide range of research concerning the well-posedness theory and long time behaviors of solutions for \eqref{eq1.2n} on $\R^n \times \T^m$.
For the Euclidean case, i.e. $m=0$, \eqref{eq1.2n} has been investigated by a number of researchers, such as \cite{B1,CW,GV,MZ,S2,S3,T1}. These results are mainly based on the Strichartz inequality
\begin{equation}\label{Strichartz estimate}
\|e^{it\Delta_{\R^n}}u_0\|_{L^{\frac{2(n+2)}{n}}(\R^{n+1})} \leq C \|u_0\|_{L^2(\R^n)}.
\end{equation}
However, for the torus case $\T^m$, the exact analogue of Strichartz's inequality \eqref{Strichartz estimate} for \eqref{eq1.2n} fails since the free evolution is periodic in time. In \cite{B0,B1}, Bourgain first used the number theoretical related lattice counting arguments to prove some scale invariant Strichartz estimate in the case of periodic domains $\T^m$. After that Burq-G$\acute{e}$rard-Tzvetkov studied the general compact manifolds in \cite{BGT0,BGT,BGT1}. The reader can also consult \cite{BGT2,DGG,G1,GP,H,SPST}. Recently, there has been much interest in research of the behaviors of solutions to \eqref{eq1.2n} on $\mathbb{R}^n \times \mathbb{T}^m$. Takaoka-Tzvetkov in \cite{TT} established $L^2$ global well-posedness results for sufficiently small data on $\R\times\T$ by the $L^4-L^2$ Strichartz's inequality
$$
\|e^{it\Delta_{\R\times\T}}u_0\|_{L^4(I\times\R\times\T)} \leq C(I) \|u_0\|_{L^2(\R\times\T)}.
$$
The argument used in \cite{TT} comes from Bourgain's idea in the periodic case \cite{B0}. Cheng-Guo-Zhao in \cite{CGZ} provided global well-posedness and scattering for the defocusing quintic nonlinear Schr\"odinger equation on $\mathbb{R} \times \mathbb{T}$, and they also showed the scattering conjecture for the quintic nonlinear Schr\"odinger equation system presented by Hani-Pausader \cite{HP} on $\mathbb{R}^2 \times \mathbb{T}$. As for the general case $\mathbb{R}^n \times \mathbb{T}^m$, Barron recently in \cite{B} proved global-in-time Strichartz-type estimates based on Bourgain-Demeter \cite{BD} $l^2$ decouping method. See \cite{CGYZ,CZZ,HTT,IP,TV,ZZ} for more results in product spaces.

In this paper we apply the methods of the periodic case in \cite{B1} and show the well-posedness for \eqref{eq1.1} on $\mathbb{R} \times \mathbb{T}$ in $H^s$, $s\geq0$. One of our results is specified as follows:
\begin{theorem}[Global well-posedness]\label{GWP}
	The Cauchy problem  \eqref{eq1.1} is locally well-posed for $u_0 \in H^s(\R \times \T)$, $s > 0$, and hence globally well-posed for $u_0 \in H^s(\mathbb{R}\times \mathbb{T})$, $s \geq 1$.
\end{theorem}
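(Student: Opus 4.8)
The plan is to run the standard Fourier-restriction (Bourgain space) local theory and then upgrade to global existence through the conservation laws. Writing a point of $\R\times\T$ as $(x,y)$ with dual frequency $\xi=(\xi_1,n)\in\R\times\Z$, so that $|\xi|^2=\xi_1^2+n^2$, I would define the spaces $X^{s,b}$ by
\[
\|u\|_{X^{s,b}}=\bigl\|\langle\xi\rangle^{s}\langle\tau+|\xi|^2\rangle^{b}\,\widehat u(\tau,\xi)\bigr\|_{L^2_\tau \ell^2_n L^2_{\xi_1}},
\]
together with the time-localized restriction norms $X^{s,b}_I$. The first step is to promote the $L^4$–$L^2$ Strichartz estimate of Takaoka–Tzvetkov, recalled in the introduction, to the embedding $\|u\|_{L^4_{t,x,y}}\lesssim\|u\|_{X^{0,b}}$ for every $b>\tfrac12$; this is the usual transfer principle, expressing a general space-time function as a superposition over modulations of the free evolution and integrating the linear bound against the modulation weight.

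The heart of the argument is the trilinear estimate
\[
\|u_1\,\overline{u_2}\,u_3\|_{X^{s,b-1}}\lesssim \prod_{j=1}^{3}\|u_j\|_{X^{s,b}},
\]
valid for some $b>\tfrac12$ and every $s>0$. By duality this reduces to bounding a quadrilinear space-time integral, which I would control with Hölder's inequality after a Littlewood–Paley decomposition: the weight $\langle\xi\rangle^{s}$ is assigned to the factor of highest frequency, and the four resulting factors are paired using the $L^4$ embedding above. The role of the restriction $s>0$ is that the cubic nonlinearity is mass-critical on the two-dimensional manifold $\R\times\T$, so the estimate is scale-critical exactly at $s=0$; taking $s>0$ produces subcritical room, which I would realize as a small positive power of the interval length through $\|u\|_{X^{s,b'}_I}\lesssim|I|^{\,b-b'}\|u\|_{X^{s,b}_I}$ for $b'<b$.

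Granting the trilinear bound together with the standard linear estimates for the free evolution and the Duhamel operator in $X^{s,b}_I$, local well-posedness for $s>0$ follows from a contraction mapping argument on the Duhamel formulation: the gained power $|I|^{\theta}$, $\theta=\theta(s)>0$, makes the solution map a contraction on a suitable ball once $|I|$ is chosen small depending only on $\|u_0\|_{H^s}$, which is precisely the lifespan bound the iteration requires. For the global statement at $s=1$ I would invoke the conserved mass and energy: since $|u|^2u$ is the defocusing nonlinearity, the energy $E(u)=\tfrac12\int|\nabla u|^2+\tfrac14\int|u|^4$ is coercive, so together with mass conservation it yields a time-independent bound on $\|u(t)\|_{H^1}$. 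As the local lifespan depends only on the $H^1$ norm, this uniform bound allows the local theory to be re-applied on consecutive intervals of fixed length, extending the solution to all times. Global well-posedness for $s>1$ then follows by persistence of regularity: the globally defined $H^1$ solution carries its higher $H^s$ regularity on every bounded time interval, with the $H^s$ norm finite (its growth rate being the object of the paper's main estimates).

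I expect the main obstacle to be the trilinear estimate, and in particular the interaction between the continuous frequency $\xi_1\in\R$ and the discrete frequency $n\in\Z$. Unlike the purely Euclidean case, the $L^4$ Strichartz constant $C(I)$ on $\R\times\T$ genuinely depends on the interval, so this dependence must be tracked through the transfer principle, and the dyadic Littlewood–Paley pieces must be summed without a logarithmic loss — which is exactly where the strict positivity of $s$ is consumed.
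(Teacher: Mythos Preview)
Your overall architecture matches the paper's: $X^{s,b}$ spaces, a trilinear bound via duality and dyadic decomposition, contraction on the Duhamel formulation, and conservation laws for globalization. There is, however, a genuine gap at the trilinear step. After you dualize, the test function $v$ carries the modulation weight $\langle\tau+|\xi|^2\rangle^{-(1-b')}$ with $1-b'<\tfrac12$, so the embedding $\|\,\cdot\,\|_{L^4}\lesssim\|\,\cdot\,\|_{X^{0,b}}$ you invoke (which requires $b>\tfrac12$) is unavailable for that factor; a naive H\"older split of the quadrilinear integral into four $L^4$ pieces therefore does not close, and no amount of time-interval shrinking repairs this.

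The paper fixes this by manufacturing a frequency-localized $L^4$ bound with modulation exponent \emph{strictly below} $\tfrac12$: interpolating the localized Strichartz estimate for data Fourier-supported in a ball $Q\subset\R\times\Z$ of radius $N$ against the elementary bound $\|u\|_{L^4}\lesssim N^{1/4}\|u\|_{X^{0,1/4+}}$ yields $\|u\|_{L^4}\lesssim N^{s_0}\|u\|_{X^{0,b_3}}$ for any $0<s_0<\tfrac14$ and some $b_3\in\bigl(\tfrac{1-2s_0}{2},\tfrac12\bigr)$. Choosing $b'$ with $1-b'>b_3$ then lets this estimate be applied to the dual factor. To prevent the accumulated $N^{s_0}$ losses from landing on the highest frequency, the paper additionally partitions the top dyadic shell $D_{k_1}$ into balls $Q_\alpha$ of size $2^{k_2}$ (the second-highest scale), applies the $L^4$ bound on each $Q_\alpha$, and resums by almost-orthogonality in $\alpha$. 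It is the absorption of these $N^{s_0}$ losses (taking $8s_0\le s$) that actually consumes the hypothesis $s>0$; the small power of the interval length you mention then emerges from the resulting slack $b_3<\tfrac12<b\le b'$, rather than being an independent source of subcritical gain. Your treatment of the global step---energy coercivity at $s=1$, then persistence for $s>1$ with lifespan governed by the $H^1$ norm---agrees with the paper, which makes the persistence explicit by noting that the trilinear bound places only one factor in $X^{s,b}$ and the remaining two in $X^{1,b}$.
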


\begin{remark}
Takaoka and Tzvetkov obtained the $L^2$ well-posedness results for \eqref{eq1.1} with small data in \cite{TT}. Therefore, we only consider the well-posedness for $u_0 \in H^s(\R \times \T)$, $s > 0$ in the rest of paper. For the proof of Theorem \ref{GWP}, we will apply the fixed point argument for the equivalent integral equation \eqref{eq1.1} in Bourgain space $X^{s,b}([0,\delta])$ with $b>\frac12$, $s>0$ and $\delta>0$ small enough. The essential point in what follows is the nonlinear estimate(trilinear estimate)
\begin{equation}\label{eq-1.12}
\|u_1\bar{u}_2u_3\|_{X_\delta^{s,b'-1}(\R\times\T)} \leq C \|u_1\|_{X_\delta^{s,b}(\R\times\T)} \|u_2\|_{X_\delta^{s,b}(\R\times\T)} \|u_3\|_{X_\delta^{s,b}(\R\times\T)},
\end{equation}
where $s>0$ and $b'>b>\frac12$. We derive it in Section 3 from Fourier multiplier method in \cite{B1} and a localized versions of the Strichartz inequality on $\R\times\T$
\begin{equation}\label{1.11}
\|e^{it\Delta_{\R\times\T}}P_{\leq N}u\|_{L^4(I\times\R\times\T)} \lesssim N^\varepsilon \|u\|_{L^2(\R\times\T)},
\end{equation}
which is established by Barron \cite{B} based on the $l^2$ decoupling method. This yields local well-posedness in the corresponding space. Then making a further discussion of the size of $\delta$, we prove the global well-posedness for $u_0 \in H^s(\mathbb{R}\times \mathbb{T})$, $s \geq 1$.
\end{remark}

Theorem \ref{GWP} implies that for $u_0\in H^s(\R\times\T)$ with $s\geq1$, there exists a unique global solution $u\in C(\R,H^s(\R\times\T))$ to \eqref{eq1.1}. Using the conservations of mass and energy, one can see that the $H^1$-norm of the solution is controlled by some constants. It is natural to ask what will happens to the $H^s$-norm with $s>1$, when $|t| \gg 1$?

Certainly, the problem has physical significance since it quantifies the Low-to-High frequency cascade, i.e. how much of the support of $|\hat{u}|^2$ has shifted from the low to high frequencies. Therefore, the growth of high Sobolev norms gives a quantitative estimate on the Low-to-High frequency cascade. The phenomenon of such a cascade in a dispersive wave model has been studied since 1960s and see for instance \cite{BN,H',Z}.

Suppose that $u$ is a solution to \eqref{eq1.1}, then by iterating the local well-posedness scheme in \cite{B0,B1,T1}, one can immediately obtain exponential bound on the growth of Sobolev norms. Namely, there exists $T=T(\|u_0\|_{H^s})$, such that
	\begin{equation}
		\|u(t)\|_{H^s}\leq C\|u(\tau)\|_{H^s},
		\label{ex}
	\end{equation}
whenever $t\in[\tau, \tau+T]$.
	Then using Picard's iteration, one infers that
	\begin{equation}
		\|u(t)\|_{H^s}\leq Ce^{|t|}.
		\label{ex1}
	\end{equation}
Later, Bourgain \cite{b1} first observed that \eqref{ex} can be improved to
	\begin{equation}
		\|u(t)\|_{H^s}\leq \|u(\tau)\|_{H^s}+C\|u(\tau)\|_{H^s}^{1-\delta},\qquad\delta^{-1}=(s-1)+
		\label{main2}
	\end{equation}
on $\mathbb{T}^2$ for Schr\"odinger equation by the Fourier multiplier method, thus \eqref{ex1} can be refined to
	\begin{equation}
	\|u(t)\|_{H^s}\leq C|t|^{2(s-1){+}}.
	\label{main3}
	\end{equation}
    After that Staffilani \cite{s,s1} showed that the solutions to some-type of KdV and Schr\"odinger equations on $\R$ and $\R^2$ possesses the polynomial bound on the time growth of $H^s$-norm, $s>1$ by using fine multilinear estimates. Colliander-Keel-Staffilani-Takaoka-Tao established polynomial bounds in low Sobolev norm with $s\in(0,1)$ for the NLS equation by proposing a new method using modified energy called the ``upside-down I-method" in \cite{CKSTT1}. Then Sohinger in \cite{SO,SO1} developed the upside-down I-method to obtain polynomial bounds on the growth of high Sobolev norms for NLS on the circle $\mathbb{S}$ and $\R$. In addition, we also refer to \cite{CWa,d1,ptv,Z1} and the references therein for further developments in this topic.

    The main result of this paper is demonstrated as following:
\begin{theorem}\label{growth}
Assume $u_0 \in H^s(\R\times\T)$, $s>1$. Then the global solution $u$ for \eqref{eq1.1} satisfies
\begin{equation}
\|u(t)\|_{H^s} \leq C|t|^{2(s-1)+}, \quad \text{for} ~|t| \gg 1,
\end{equation}
here the constant $C$ is dependent on $M(u_0)$ and $E(u_0)$.
\end{theorem}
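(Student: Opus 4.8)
The plan is to reduce the global growth estimate to a local-in-time \emph{increment bound} and then iterate. First I would use the conservation laws: since the equation is defocusing, the energy controls the homogeneous $\dot{H}^1$ norm, so $\|u(t)\|_{H^1}^2 \leq M(u_0) + 2E(u_0)$ uniformly in $t$. Because the cubic nonlinearity on $\R\times\T$ is $L^2$-critical, the local theory built on the trilinear estimate \eqref{eq-1.12} and the Strichartz estimate \eqref{1.11} is subcritical at the $H^1$ level; hence there is a time step $T = T(M(u_0), E(u_0)) > 0$, \emph{independent of the high Sobolev norm}, such that on every interval $[\tau, \tau+T]$ the solution exists and, by persistence of regularity, satisfies $\|u\|_{X^{s,b}([\tau,\tau+T])} \lesssim \|u(\tau)\|_{H^s}$. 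The goal is then to prove the increment bound
\begin{equation*}
\|u(\tau+T)\|_{H^s} \leq \|u(\tau)\|_{H^s} + C\|u(\tau)\|_{H^s}^{1-\delta}, \qquad \delta^{-1} = 2(s-1)+,
\end{equation*}
with $C$ depending only on $M(u_0)$ and $E(u_0)$.

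The heart of the argument is Bourgain's Fourier multiplier method \cite{b1,B1}, applied to this increment. Writing $J = \langle\nabla\rangle$, a direct computation using the equation gives
\begin{equation*}
\frac{d}{dt}\|u(t)\|_{H^s}^2 = 2\,\Im \langle J^s(|u|^2u), J^s u\rangle,
\end{equation*}
since the linear flow preserves $H^s$. On the Fourier side over $\R\times\Z$, the right-hand side is a multilinear sum over the frequency constraint $n_1 - n_2 + n_3 - n_4 = 0$ weighted by $\langle n_4\rangle^{2s}$. The key is to symmetrize in the input frequencies: after taking the imaginary part and using the symmetries $n_1 \leftrightarrow n_3$ and $n_2 \leftrightarrow n_4$, the weight is replaced by the commutator symbol
\begin{equation*}
M(n_1,n_2,n_3,n_4) = \langle n_1\rangle^{2s} - \langle n_2\rangle^{2s} + \langle n_3\rangle^{2s} - \langle n_4\rangle^{2s},
\end{equation*}
which vanishes to high order when the frequencies are balanced. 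By the mean value theorem $|M| \lesssim \langle n_{\max}\rangle^{2s-1}\,(\text{frequency differences})$, so the symbol already carries one power of smoothing relative to the trivial weight $\langle n_{\max}\rangle^{2s}$.

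Next I would integrate the identity over $[\tau,\tau+T]$ and exploit the dispersive resonance function $\Omega = |n_1|^2 - |n_2|^2 + |n_3|^2 - |n_4|^2$, which on $\R\times\T$ factors in each component as in the torus case. I would split into the non-resonant region $\{|\Omega| \gtrsim \text{(frequency differences)}\}$ and the resonant region. On the non-resonant part, integration by parts in time (a one-step normal form, using $e^{it\Omega}$ from the interaction representation, equivalently the modulation weight of the $X^{s,b}$ norm) produces a factor $|\Omega|^{-1}$ that absorbs the frequency differences in $M$ and yields a genuine gain of derivatives; on the resonant part, the geometry of $\Omega = 0$ forces two of the frequencies to be comparable, which again controls $M$. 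In both cases the resulting multilinear expressions are estimated by the $L^4$ Strichartz bound \eqref{1.11} together with the embedding $X^{s,b}\hookrightarrow L^4_{t,x}$, using the uniform $H^1$ control to place the low-frequency factors. Optimizing the frequency thresholds in this splitting produces the gain $\delta$ and the increment bound above.

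Finally, setting $a_n = \|u(nT)\|_{H^s}$, the increment bound reads $a_{n+1} \leq a_n + Ca_n^{1-\delta}$; raising to the power $\delta$ gives $a_{n+1}^\delta \leq a_n^\delta + \delta C$, hence $a_n \lesssim n^{1/\delta}$. Since $T$ is uniform, $n \sim |t|/T$, and with $\delta^{-1} = 2(s-1)+$ this yields $\|u(t)\|_{H^s} \lesssim |t|^{2(s-1)+}$. The main obstacle I anticipate is the extraction of the quantitative gain $\delta$: one must balance the smoothing in the symbol $M$ against the $\eps$-losses in the Strichartz estimate \eqref{1.11} and handle the mixed continuous--discrete frequency structure of $\R\times\T$, where the resonance relation and the lattice-counting inputs behave differently from the purely periodic setting of \cite{b1}. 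Organizing the resonant/non-resonant decomposition so that every interaction gains at least $\delta$ derivatives, uniformly in the low-frequency data controlled by mass and energy, is the crux of the proof.
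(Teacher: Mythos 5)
Your proposal follows essentially the same route as the paper: reduce the theorem to an increment bound $\|u(t_{j+1})\|_{H^s}^2 \le \|u(t_j)\|_{H^s}^2 + C\|u(t_j)\|_{H^s}^{2-\gamma}$ on time steps of length $T(\|u_0\|_{H^1})$, prove it by Bourgain's Fourier multiplier method --- cancellation of the fully paired term $\Im\int |\partial^{s}u|^2|u|^2$, interpolation against the conserved $H^1$ norm for the interactions with distributed derivatives, and a resonant/non-resonant (large modulation) analysis for the $(\partial^{s}\bar u)^2u^2$ interaction estimated through the $\varepsilon$-loss $L^4$ Strichartz bound --- and then iterate to get $|t|^{2(s-1)+}$. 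The only divergence is bookkeeping: you encode the cancellation in the symmetrized commutator symbol $M$ and a normal form in time, whereas the paper applies the Leibniz rule directly and extracts the same gain from the $X^{s,b}$ modulation weights; these are two presentations of the same mechanism.
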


The idea of the proof for Theorem \ref{growth} is inspired by Bourgain's work in \cite{B1}. Comparing to $\R^2$ case, we can not expect the dispersive effects on $\R\times\T$, so the natural problems and methods are different. We only try to connect with the case of periodic $\T^2$ from \cite{B1}.


In order to obtain the polynomial bound in Theorem \ref{growth}, it suffices to prove for $T=T(\|u_0\|_{H^1})$
\begin{equation}\label{eq-1.11}
		\|u(t)\|_{H^s}\leq \|u(\tau)\|_{H^s}+C\|u(\tau)\|_{H^s}^{1-\gamma}.
	\end{equation}
for any $t\in[\tau, \tau+T]$ and some $\gamma=\gamma(s)$. Indeed, from the special structure of \eqref{eq1.1}, \eqref{eq-1.11} can be reduced to estimate the nonlinear term of \eqref{eq1.1}. Then we can obtain the iteration bound \eqref{eq-1.11} by the upper bound nonlinear estimate \eqref{eq-1.12}. Once this iteration bound is established, a classical reduction argument leads to the polynomial growth.

Finally the rest of the paper is organized as follows: Section 2 provides some notations and known results, which will be used hereinafter. In Section 3, we will prove a nonlinear estimate in Bourgain space $X^{s,b}(\R\times\R\times\T)$, and give a proof for Theorem \ref{GWP}. Finally we prove Theorem \ref{growth} in Section 4.

\section{preliminaries}
As is standard, we use $A\lesssim B$ to denote the statement that $A\le CB$ for some large harmless constant C which may vary from line to line. We use $A\sim B$ to denote the statement that $A\lesssim B\lesssim A$. And we use $a\pm$ to denote $a\pm\varepsilon$ for any $\varepsilon\ll1$. We use the standard Lebesgue norms
	$$
	\|u\|_{L_x^p(\R\times\T)}=\left( \int_{\R\times\T}|u(x)|^p{\rm d}x\right)^{\frac1p} ,\qquad\|u\|_{L^q_tL^p_{x}(\R\times\R\times\T)}=\left\| \|u(x)\|_{L^p_x(\R\times\T)}\right\| _{L^q_t(\R)},
	$$
	and the Fourier transform on $\R\times\T$
	$$
	\hat{u}(\xi,n)=\int_{\R\times\T}e^{-2\pi i(x_1,x_2)\cdot (\xi,n)}u(x){\rm d}x, \quad x=(x_1,x_2)\in\R\times\T.
	$$
And we also note the Fourier inversion transform
$$
u(x) = \sum_{n\in\Z}\int_{\xi\in\R}e^{2\pi i(\xi,n)\cdot (x_1,x_2)}\hat{u}(\xi,n){\rm d}\xi, \quad x=(x_1,x_2)\in\R\times\T.
$$
\begin{definition}\label{bourgain space}
Let $X^{s,b}(\R\times\R\times\T)$ be the Bourgain space associated to the 2-dimensional Schr\"odinger equation with data on $\R\times\T$, equipped with the norm
\begin{equation}
\|u\|_{X^{s,b}(\R\times\R\times\T)} = \Big( \sum_{n\in\Z}\int_{\R}\int_{\R} (1 + |\tau - \xi^2 - n^2|)^{2b} (1 + |\xi| + |n|)^{2s} |\hat{u}(\tau,\xi,n)|^2 {\rm d}\tau {\rm d}\xi \Big)^{\frac12}.
\end{equation}
\end{definition}

Then for $0 < T \leq 1$, we denote by $X^{s,b}_T(\R\times\T)$ the space of elements of $X^{s,b}(\R\times\R\times\T)$ endowed with the norm
\begin{equation}
\|u\|_{X^{s,b}_T(\R\times\T)} = \inf\big\{ \|\tilde{u}\|_{X^{s,b}(\R\times\R\times\T)},~ \tilde{u}|_{(-T,T)\times\R\times\T} = u \big\}.
\end{equation}

We start with a brief review of some basic properties of Bourgain space.
\begin{proposition}\label{Prop-2.2}\
\begin{enumerate}
  \item $u \in X^{s,b}(\R\times\R\times\T) \Longleftrightarrow e^{it\Delta}u(t,\cdot) \in H^b(\R; H^s(\R\times\T))$.
  \item For $b>\frac12$, $X^{s,b}(\R\times\R\times\T) \hookrightarrow C(\R; H^s(\R\times\T))$, and $X^{s,b}_T(\R\times\T) \hookrightarrow C((-T,T); H^s(\R\times\T))$.
  \item For $s_1 \leq s_2, b_1 \leq b_2$, $X^{s_2,b_2}(\R\times\R\times\T) \hookrightarrow X^{s_1,b_1}(\R\times\R\times\T)$.
\end{enumerate}
\end{proposition}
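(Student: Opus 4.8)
The plan is to treat the three statements in order of increasing (though still modest) difficulty: part (3) is essentially immediate, part (1) is a change-of-variables identity, and part (2) is the only item requiring a genuine embedding theorem.

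For (3) I would argue directly from Definition \ref{bourgain space}. Since the bases $1+|\tau-\xi^2-n^2|$ and $1+|\xi|+|n|$ are both $\geq 1$, the weights $(1+|\tau-\xi^2-n^2|)^{2b}$ and $(1+|\xi|+|n|)^{2s}$ are nondecreasing in $b$ and in $s$ respectively. Hence for $s_1\le s_2$ and $b_1\le b_2$ the integrand defining $\|u\|_{X^{s_1,b_1}}$ is dominated pointwise by that defining $\|u\|_{X^{s_2,b_2}}$, which yields $\|u\|_{X^{s_1,b_1}}\le\|u\|_{X^{s_2,b_2}}$ and the claimed embedding with constant one.

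For (1) the point is that conjugation by the free evolution flattens the paraboloid weight. Writing $v(t,\cdot)=e^{it\Delta}u(t,\cdot)$, whose spatial Fourier multiplier at frequency $(\xi,n)$ is $e^{-it(\xi^2+n^2)}$ (up to the normalization constants suppressed in the definition), I would take the full space-time Fourier transform and obtain $\hat v(\tau,\xi,n)=\hat u(\tau+\xi^2+n^2,\xi,n)$. Inserting this into the definition of $\|v\|_{H^b(\R;H^s(\R\times\T))}^2$ and changing variables $\tau\mapsto\tau-\xi^2-n^2$ in the $\tau$-integral reproduces exactly the weight $(1+|\tau-\xi^2-n^2|)^{2b}(1+|\xi|+|n|)^{2s}$, so that $\|v\|_{H^b_tH^s_x}=\|u\|_{X^{s,b}}$. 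This gives the stated equivalence.

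Part (2) is where the only real work lies. By (1) it suffices to show that $v=e^{it\Delta}u\in H^b(\R;H^s(\R\times\T))$ with $b>\frac12$ forces $u\in C(\R;H^s(\R\times\T))$ together with the norm bound. The key ingredient is the one-dimensional Sobolev embedding $H^b(\R)\hookrightarrow C_b(\R)$, valid precisely for $b>\frac12$; applied in the time variable with values in the Hilbert space $H^s(\R\times\T)$ it gives $v\in C(\R;H^s(\R\times\T))$ and $\sup_t\|v(t)\|_{H^s}\lesssim\|v\|_{H^b_tH^s_x}$. Since $e^{-it\Delta}$ is a strongly continuous unitary group on $H^s(\R\times\T)$, the map $t\mapsto u(t)=e^{-it\Delta}v(t)$ is then continuous into $H^s$ and obeys $\|u(t)\|_{H^s}=\|v(t)\|_{H^s}$, which establishes $X^{s,b}\hookrightarrow C(\R;H^s)$. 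For the local statement I would pass to the restriction norm: given $u$ on $(-T,T)$, choose an extension $\tilde u\in X^{s,b}(\R\times\R\times\T)$ with $\|\tilde u\|_{X^{s,b}}\le 2\|u\|_{X^{s,b}_T}$, apply the global embedding to $\tilde u$, and restrict. The only points demanding care are the vector-valued version of the time-Sobolev embedding and the strong continuity and unitarity of the Schr\"odinger group, which together guarantee that continuity in $t$ survives conjugation by $e^{-it\Delta}$; the remainder is routine bookkeeping.
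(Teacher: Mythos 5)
Your proof is correct: part (3) by monotonicity of the weights, part (1) by the change of variables $\tau\mapsto\tau-\xi^2-n^2$ after conjugating by the free flow, and part (2) by the vector-valued Sobolev embedding $H^b(\R;H^s)\hookrightarrow C_b(\R;H^s)$ for $b>\frac12$ combined with strong continuity and unitarity of $e^{-it\Delta}$ on $H^s$, plus a near-optimal extension for the restricted space. The paper states these properties without proof as standard facts, and your argument is exactly the standard one.
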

%
%

Next we give a bound for $\|\chi_{[c,d]}(t)u\|_{X^{s,b}(\R\times\R\times\T)}$. A similar result was proved in \cite{CKSTT} and \cite{SO}, but in different spaces.
\begin{lemma}\label{lemma-t}
Let $c,d\in\R$ such that $c<d$. If $b\in(0,\frac12)$, and $s\in\R$, then one has
\begin{equation*}
\|\chi_{[c,d]}(t)u\|_{X^{s,b}(\R\times\R\times\T)}\lesssim\|u\|_{X^{s,b+}(\R\times\R\times\T)}
\end{equation*}
where the implicit constant does not depend on $c,d$.
\end{lemma}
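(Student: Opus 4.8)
The plan is to strip off the spatial variables and the dispersive weight, reducing everything to a one–dimensional statement in the time variable, and then to prove that scalar statement by the Sobolev--Slobodeckij (Gagliardo) characterization of $H^b$ together with a fractional Hardy inequality. Concretely, I would first conjugate by the free evolution: by Proposition \ref{Prop-2.2}(1), writing $w(t,\cdot)=e^{it\Delta}u(t,\cdot)$ one has $\|u\|_{X^{s,b}(\R\times\R\times\T)}=\|w\|_{H^b(\R;H^s(\R\times\T))}$, and likewise for the $X^{s,b+}$ norm. Since $\chi_{[c,d]}(t)$ is a scalar function of $t$ alone, it commutes with the fibered operator $e^{it\Delta}$, so the conjugation carries $\chi_{[c,d]}u$ to $\chi_{[c,d]}w$. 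Moreover the $H^s(\R\times\T)$ norm is $L^2$-based on the spatial frequencies $(\xi,n)$, which $\chi_{[c,d]}$ does not touch; hence by Fubini it suffices to prove the scalar, one–dimensional estimate
\begin{equation*}
\|\chi_{[c,d]}f\|_{H^b(\R)}\lesssim\|f\|_{H^{b+}(\R)},\qquad 0<b<\tfrac12,
\end{equation*}
uniformly in $c<d$, and then apply it fiberwise in $(\xi,n)$ and reintegrate the weight $(1+|\xi|+|n|)^{2s}$.

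For the scalar estimate I would use the Gagliardo characterization, valid for $0<b<\tfrac12$,
\begin{equation*}
\|g\|_{H^b(\R)}^2\sim\|g\|_{L^2(\R)}^2+\iint_{\R\times\R}\frac{|g(t)-g(t')|^2}{|t-t'|^{1+2b}}\,dt\,dt'.
\end{equation*}
The $L^2$ part is immediate since $\|\chi_{[c,d]}f\|_{L^2}\le\|f\|_{L^2}$. For the seminorm, set $g=\chi_{[c,d]}f$ and split the double integral according to whether $t,t'$ lie inside or outside $[c,d]$. When both points are inside, $g(t)-g(t')=f(t)-f(t')$ and the contribution is bounded by the full seminorm of $f$; when both are outside it vanishes. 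The crux is the mixed region, say $t\in[c,d]$ and $t'\notin[c,d]$, where $g(t)-g(t')=f(t)$; performing the inner integral in $t'$ over the complement of $[c,d]$ yields
\begin{equation*}
\iint_{\substack{t\in[c,d]\\ t'\notin[c,d]}}\frac{|f(t)|^2}{|t-t'|^{1+2b}}\,dt'\,dt\lesssim\int_c^d|f(t)|^2\Big((t-c)^{-2b}+(d-t)^{-2b}\Big)\,dt.
\end{equation*}

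This last weighted integral is exactly controlled by the fractional Hardy inequality $\int_\R|t-a|^{-2b}|f(t)|^2\,dt\lesssim\|f\|_{\dot H^b(\R)}^2$, which holds precisely in the range $0<b<\tfrac12$ and is translation invariant, so the bound is uniform in the center $a\in\{c,d\}$ and hence in $c,d$. Collecting the three regions gives $\|\chi_{[c,d]}f\|_{H^b}\lesssim\|f\|_{H^b}\le\|f\|_{H^{b+}}$, which is even a little stronger than the claimed bound. I expect the mixed region, together with its uniformity in $c,d$, to be the only real obstacle: the restriction $b<\tfrac12$ is essential both for the Gagliardo characterization and for Hardy's inequality (for $b\ge\tfrac12$ elements of $H^b$ are continuous and multiplication by the discontinuous cutoff destroys the regularity), while the only dependence on the interval enters through its two endpoints, which translation invariance neutralizes. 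The extra $\varepsilon$ of regularity in $H^{b+}$ is genuine slack here and could alternatively be spent to replace the sharp Hardy inequality by a cruder weighted bound.
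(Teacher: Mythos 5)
Your argument is correct, and it is worth noting that the paper itself gives no proof of this lemma: it only points to \cite{CKSTT} and \cite{SO}, where the analogous statements are proved on the Fourier side by decomposing the sharp cutoff into a smooth bump plus high-modulation tails (which is where the $\eps$-loss in $b+$ traditionally enters). Your route is genuinely different and self-contained: the conjugation/translation in $\tau$ correctly reduces the matter to the one-dimensional bound $\|\chi_{[c,d]}f\|_{H^b(\R)}\lesssim\|f\|_{H^b(\R)}$, and the real-variable proof via the Gagliardo seminorm plus the fractional Hardy inequality $\int|t-a|^{-2b}|f|^2\,dt\lesssim\|f\|_{\dot H^b}^2$ (valid exactly for $0<b<\tfrac12$, translation invariant) handles the mixed region with constants depending only on $b$, hence uniformly in $c,d$. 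Two small things deserve a sentence in a write-up: the identity $[g]_{W^{b,2}}^2=c_b\|g\|_{\dot H^b}^2$ should be invoked to justify the Gagliardo equivalence as an identity of possibly infinite quantities (so no a priori membership of $\chi_{[c,d]}f$ in $H^b$ is needed), and the symmetric mixed region $t\notin[c,d]$, $t'\in[c,d]$ should be mentioned as handled by symmetry. What your approach buys is a sharper conclusion — no loss at all, i.e.\ $\|\chi_{[c,d]}u\|_{X^{s,b}}\lesssim\|u\|_{X^{s,b}}$ for $0<b<\tfrac12$ — which of course implies the stated $X^{s,b+}$ bound; the Fourier-side proofs in the cited references are instead designed to degrade gracefully up to and including the endpoint $b=\tfrac12$, where your Hardy-inequality step breaks down.
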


Now we define the Littlewood-Paley decomposition. For a dyadic integer $1\leq N = 2^j, j\in\Z$. We let
$$
\mathcal{F}(P_{\leq N}u)(\xi,n) = \psi(\xi/N)\psi(n/N)\hat{u}(\xi,n),
$$
where $\psi\in C_0^\infty(\R)$ is a smooth bump function such that
\begin{equation*}
\psi(\xi)=
\begin{cases}
1,&~ |\xi|\leq 1,\\
0,&~ |\xi|\geq 2.
\end{cases}
\end{equation*}
And we revisit a local Strichartz estimate from Proposition 3.4 in Barron \cite{B}.
\begin{lemma}\label{proposition-2.3}
For any bounded time interval $I$. One has
\begin{equation}\label{eq-2.3}
\|e^{it\Delta_{\R\times\T}}P_{\leq N}u\|_{L^4(I\times\R\times\T)} \lesssim N^\varepsilon \|u\|_{L^2(\R\times\T)}.
\end{equation}
\end{lemma}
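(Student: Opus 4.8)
The plan is to derive this $L^4$ bound from the Bourgain--Demeter $\ell^2$ decoupling theorem for the paraboloid sitting in spacetime $\R^3$ (spatial dimension $2$), for which the critical decoupling exponent is exactly $p_c=\tfrac{2(d+1)}{d-1}\big|_{d=3}=4$. This is precisely why one expects only an $N^\varepsilon$ loss and no positive power of $N$. First I would reduce to a unit time interval $I=[0,1]$: a general bounded $I$ is covered by $O(|I|)$ unit intervals, and the resulting constant is allowed to depend on $I$. Writing $e^{it\Delta_{\R\times\T}}P_{\le N}u$ (up to harmless constants in the phase) as
\begin{equation*}
F(t,x_1,x_2)=\sum_{|n|\le N}\int_{|\xi|\le N} a(\xi,n)\,e^{i(x_1\xi+x_2 n+t(\xi^2+n^2))}\,\mathrm{d}\xi,\qquad \|a\|_{L^2_\xi\ell^2_n}=\|u\|_{L^2},
\end{equation*}
the object is the extension operator for the surface $\{(\xi,n,\xi^2+n^2)\}$, which is continuous in $\xi$ and supported on a lattice in $n$.

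Next I would rescale to normalize frequencies. Setting $\xi=N\eta$, $n=N\nu$ (so $\nu\in\tfrac1N\Z$), $x_1=y_1/N$, $x_2=y_2/N$, $t=s/N^2$, the phase becomes $y_1\eta+y_2\nu+s(\eta^2+\nu^2)$ with $|\eta|,|\nu|\le1$, while the spacetime region becomes $s\in[0,N^2]$, $y_1\in\R$, and $y_2\in[0,N)$ (periodic). The lattice spacing $\tfrac1N$ in $\nu$ fixes the cap size: partition $[-1,1]^2$ into caps $\theta$ of side $\delta=N^{-1}$, so that the decoupling ball has radius $\delta^{-2}=N^2$, exactly matching the time extent $s\in[0,N^2]$. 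With this choice each cap contains a single lattice value $\nu=k/N$ together with a genuine continuum of $\eta$, so the continuous--discrete hybrid is respected. Applying $\ell^2$ decoupling at $p=4$ then gives
\begin{equation*}
\|F\|_{L^4(B)}\lesssim_\varepsilon N^\varepsilon\Big(\sum_\theta\|F_\theta\|_{L^4(w_B)}^2\Big)^{1/2}.
\end{equation*}

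The single-cap pieces are easy to control, and here the hybrid structure actually helps: on a cap with $\nu$ fixed at $\nu_\theta$ the $y_2$-dependence of $F_\theta$ is the pure phase $e^{i(y_2\nu_\theta+s\nu_\theta^2)}$, so $|F_\theta|$ is \emph{independent} of $y_2$ and the single-cap bound collapses to a one-dimensional (in the $\eta$-variable) Schr\"odinger $L^4$ estimate over $\R_{y_1}\times\R_s$, which is elementary via Bernstein and $L^2$-orthogonality. Summing $\sum_\theta\|F_\theta\|_{L^4(w_B)}^2$ and using Plancherel to return to $\|a\|_{L^2_\xi\ell^2_n}$, then undoing the rescaling (which is scale-invariant precisely at $p=4$), yields the claim with the sole loss $N^\varepsilon$.

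The step I expect to be the main obstacle is transferring the Euclidean decoupling inequality to the periodic variable $x_2$ without incurring a power of $N$ beyond $N^\varepsilon$. The decoupling ball has radius $N^2$ while the rescaled period in $y_2$ is only $N$, so one must periodically extend $F$ in $y_2$ and apply decoupling over balls of radius $N^2$ covering the slab, checking that summing over the $\sim N$ periods does not destroy the $N^\varepsilon$ bound; this is exactly the semiperiodic refinement carried out by Barron. A secondary point is verifying that the $L^2$-orthogonality used both in the single-cap step and in the reassembly is genuinely lossless across the lattice directions. As an alternative route, since $p=4$ is even one could instead expand $\|F\|_{L^4}^4=\|F^2\|_{L^2}^2$ by Plancherel and reduce to counting representations of a fixed $(\tau,\xi,n)$ as a sum of two points on the paraboloid, where the $n$-counting produces the $N^\varepsilon$ via a divisor bound; but the decoupling argument above is the cleaner and more robust one, matching the cited method.
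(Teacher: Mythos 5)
The paper does not actually prove this lemma---it simply cites Proposition 3.4 of Barron \cite{B}, which is itself established by exactly the $\ell^2$-decoupling argument you outline. Your sketch (parabolic rescaling to caps of side $N^{-1}$ on a ball of radius $N^{2}$, decoupling at the critical exponent $p=4$, single-cap bounds via Bernstein and $L^2$-conservation in the continuous variable, and the periodic-extension bookkeeping needed because the rescaled period $N$ is shorter than the ball radius $N^{2}$) is a faithful reconstruction of that cited proof, so it takes essentially the same approach as the paper's source.
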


Choosing a ball $Q$ in $\R\times\Z$ of size $N$ entered at a point, and making a change of variable to $(\xi,n)$, we can rewrite \eqref{eq-2.3} as
\begin{equation}\label{eq-2.3'}
\|e^{it\Delta_{\R\times\T}}u\|_{L^4(I\times\R\times\T)} \lesssim N^\varepsilon \|u\|_{L^2(\R\times\T)},
\end{equation}
where $supp ~\hat{u}\in Q$.

Then from Burq-G$\acute{e}$rard-Tzvetkov \cite{BGT}, \eqref{eq-2.3'} is equivalent to the following lemma.
\begin{lemma}\label{Lemma2.4}
For any $b_1>\frac12$ and $u,v \in X^{0,b_1}(\R\times\R\times\T)$ satisfying $supp ~\hat{u}\in Q_1,supp ~\hat{v}\in Q_2$, where the ball $Q_j$ of size $N_j,j=1,2$. One has
\begin{equation}\label{eq-2.4}
\|uv\|_{L^2(\R\times\R\times\T)} \leq C (N_1N_2)^\varepsilon \|u\|_{X^{0,b_1}(\R\times\R\times\T)}\|v\|_{X^{0,b_1}(\R\times\R\times\T)}.
\end{equation}
\end{lemma}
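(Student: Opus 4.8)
The plan is to prove \eqref{eq-2.4} by the transference principle of Burq--G\'erard--Tzvetkov, reducing the bilinear $L^2$ bound in the Bourgain space to the linear $L^4$ Strichartz estimate \eqref{eq-2.3'}. The first step is to upgrade \eqref{eq-2.3'} to a bilinear estimate for free evolutions: if $f,g\in L^2(\R\times\T)$ have Fourier supports in $Q_1,Q_2$ of sizes $N_1,N_2$, then H\"older's inequality gives $\|(e^{it\Delta}f)(e^{it\Delta}g)\|_{L^2}\le \|e^{it\Delta}f\|_{L^4}\|e^{it\Delta}g\|_{L^4}$, and applying \eqref{eq-2.3'} to each factor yields $\|(e^{it\Delta}f)(e^{it\Delta}g)\|_{L^2}\lesssim (N_1N_2)^\varepsilon\|f\|_{L^2}\|g\|_{L^2}$. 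The frequency localization is essential here: it lets me invoke \eqref{eq-2.3'} with the correct ball and the correct power of $N_j$ for each factor separately.

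Next I would represent an $X^{0,b_1}$ function as a superposition of modulated free solutions. Writing $\tau=\xi^2+n^2+\lambda$ and setting $f_\lambda$ to be the spatial function with $\widehat{f_\lambda}(\xi,n)=\hat u(\xi^2+n^2+\lambda,\xi,n)$, one has the representation $u(t,x)=\int_\R e^{2\pi i t\lambda}\,(e^{it\Delta}f_\lambda)(x)\,d\lambda$ together with the identity $\|u\|_{X^{0,b_1}}^2\sim\int_\R(1+|\lambda|)^{2b_1}\|f_\lambda\|_{L^2}^2\,d\lambda$. The key structural point is that, since $\hat u$ is supported in $Q_1$, each $f_\lambda$ again has Fourier support in $Q_1$ (and likewise the $g_\mu$ arising from the analogous decomposition of $v$ lie in $Q_2$), so the bilinear Strichartz estimate from the first step applies verbatim to every pair $(f_\lambda,g_\mu)$ with the same power $(N_1N_2)^\varepsilon$.

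The conclusion then follows by combining the two ingredients. Decomposing both factors gives $uv=\int\!\!\int e^{2\pi i t(\lambda+\mu)}(e^{it\Delta}f_\lambda)(e^{it\Delta}g_\mu)\,d\lambda\,d\mu$, and since the unimodular phase $e^{2\pi i t(\lambda+\mu)}$ does not affect the $L^2_t$ norm, Minkowski's inequality yields $\|uv\|_{L^2}\le\int\!\!\int\|(e^{it\Delta}f_\lambda)(e^{it\Delta}g_\mu)\|_{L^2}\,d\lambda\,d\mu\lesssim (N_1N_2)^\varepsilon\big(\int\|f_\lambda\|_{L^2}\,d\lambda\big)\big(\int\|g_\mu\|_{L^2}\,d\mu\big)$. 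Finally, inserting the weight $(1+|\lambda|)^{b_1}$ and applying Cauchy--Schwarz in $\lambda$, the factor $\int(1+|\lambda|)^{-2b_1}\,d\lambda$ converges precisely because $b_1>\frac12$, and the remaining factor is $\sim\|u\|_{X^{0,b_1}}$; the same handling of $\mu$ closes the estimate.

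The step I expect to require the most care is matching the time interval: \eqref{eq-2.3'} (via Lemma \ref{proposition-2.3}) is a local-in-time Strichartz estimate with a constant depending on $I$, whereas \eqref{eq-2.4} is written with the $L^2$ norm over all of $\R\times\R\times\T$. Because the unimodular phases drop out under any $L^p(I;\cdot)$ norm, the transference above in fact produces the bilinear bound on each fixed bounded interval with constant $C(I)$, which is exactly what is used later in the time-restricted spaces $X^{s,b}_\delta$; the global-in-time statement should be understood in this sense, or deduced from the global version of Barron's estimate. A secondary technical point is justifying the superposition identity and the Fubini/Minkowski interchange, i.e.\ defining $f_\lambda$ as a Fourier restriction to a measure-zero set and controlling it only through the $L^2$ identity for the $X^{0,b_1}$ norm rather than pointwise.
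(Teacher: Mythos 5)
Your proposal is correct and follows essentially the same route as the paper: both represent the $X^{0,b_1}$ functions as superpositions of modulated free evolutions (your $f_\lambda$ is exactly the time-Fourier slice $\hat U(\tau)$ of $U(t)=e^{it\Delta}u(t)$ used in the paper), apply the bilinear consequence of \eqref{eq-2.3'} to each pair of free solutions, and close with Cauchy--Schwarz in the modulation variables using $b_1>\tfrac12$. The paper handles the local-versus-global time issue you flag by decomposing $\R_t=\Z+(0,1)$ with a smooth partition of unity and reducing to the case of functions supported in a unit time interval, which is the standard completion of the step you left as a remark.
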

\begin{proof}
Suppose that $u(t)$, and $v(t)$ are supported in time interval $(0,1)$ and write
$$
u(t) = e^{-it\Delta_{\R\times\T}}e^{it\Delta_{\R\times\T}}u(t) := e^{-it\Delta_{\R\times\T}}U(t),
$$
and
$$
v(t) = e^{-it\Delta_{\R\times\T}}e^{it\Delta_{\R\times\T}}v(t) := e^{-it\Delta_{\R\times\T}}V(t).
$$
Then we have
$$
u(t) = \frac{1}{2\pi}\int_{\R}e^{it\tau}e^{-it\Delta_{\R\times\T}}\hat{U}(\tau){\rm d}\tau, \quad v(t) = \frac{1}{2\pi}\int_{\R}e^{it\tau}e^{-it\Delta_{\R\times\T}}\hat{V}(\tau){\rm d}\tau
$$
and
$$
(uv)(t) = \frac{1}{(2\pi)^2}\int_{\R}\int_{\R}e^{it(\tau_1+\tau_2)}e^{-it\Delta_{\R\times\T}}\hat{U}(\tau_1)
e^{-it\Delta_{\R\times\T}}\hat{V}(\tau_2){\rm d}\tau_1{\rm d}\tau_2.
$$
From \eqref{eq-2.3'} and the Cauchy-Schwarz inequality in $(\tau_1,\tau_2)$, one can obtain
\begin{equation}
\begin{aligned}
\|uv\|_{L^2((0,1)\times\R\times\T)} \leq & C(N_1N_2)^\varepsilon\int_{\tau_1,\tau_2}\|\hat{U}(\tau_1)\|_{L^2(\R\times\T)}\|\hat{V}(\tau_2)\|_{L^2(\R\times\T)}{\rm d}\tau_1{\rm d}\tau_2\\
\leq & C(N_1N_2)^\varepsilon\|(1+|\tau_1-\xi_1^2-n_1^2|)^b\hat{U}(\tau_1)\|_{L^2(\R\times\T)}\\
&\times \|(1+|\tau_2-\xi_2^2-n_2^2|)^b\hat{V}(\tau_2)\|_{L^2(\R\times\T)}\\
\leq & C(N_1N_2)^\varepsilon\|u\|_{X^{0,b_1}(\R\times\T)}\|v\|_{X^{0,b_1}(\R\times\T)},
\end{aligned}
\end{equation}
for $b_1>\frac12$. Ultimately, decomposing $\R_t = \Z + (0,1)$ and $u(t)=\sum_{m\in\Z}\psi(t-\frac{m}2)u(t)$ and $v(t)=\sum_{m\in\Z}\psi(t-\frac{m}2)v(t)$  with a suitable $\psi \in C_0^\infty(\R)$ supported in $(0,1)$, the general case for $u(t)$ and $v(t)$ follows from the considered particular case of $u(t)$ and $v(t)$ supported in times in the interval $(0,1)$.
\end{proof}

Furthermore, we get the following additional local estimate by simple calculations.
\begin{lemma}\label{l4' esitimate}
For $b_2 > \frac14$. Assume $supp ~\hat{u}\in Q$ with the ball $Q$ of size $N$, then the following inequality holds:
\begin{equation}
\|u\|_{L^4(\R\times\R\times\T)} \leq C N^{\frac14}\|u\|_{X^{0,b_2}(\R\times\R\times\T)}.
\end{equation}
\end{lemma}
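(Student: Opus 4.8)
The plan is to deduce the $L^4$ bound from the bilinear $L^2$ estimate of Lemma~\ref{Lemma2.4}, paying a power of $N$ in exchange for lowering the exponent from $b_1>\frac12$ down to $b_2>\frac14$. First I would square and linearize: since $|u^2|=|u|^2$ gives $\|u\|_{L^4(\R\times\R\times\T)}^2=\|u^2\|_{L^2(\R\times\R\times\T)}$, it suffices to bound $\|u^2\|_{L^2}$. Writing $\zeta=(\tau,\xi,n)$ and $\zeta_1=(\tau_1,\xi_1,n_1)$, Plancherel gives $\|u^2\|_{L^2}^2=\|\widehat{u^2}\|_{L^2_\zeta}^2$ with
\[
\widehat{u^2}(\zeta)=\int \hat u(\zeta_1)\,\hat u(\zeta-\zeta_1)\,d\zeta_1,
\]
the integral running over those $\zeta_1$ for which both $(\xi_1,n_1)$ and $(\xi-\xi_1,n-n_1)$ lie in the size-$N$ ball $Q$.

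Next I would peel off the Bourgain weights. Setting $\sigma_1=\tau_1-\xi_1^2-n_1^2$ and $\sigma_2=(\tau-\tau_1)-(\xi-\xi_1)^2-(n-n_1)^2$, so that the relevant $X^{0,b_2}$-weights are $(1+|\sigma_j|)^{b_2}$, I insert these weights in the integrand and apply Cauchy--Schwarz in $\zeta_1$. This factors the estimate into a multiplier
\[
K(\zeta)=\int_{(\xi_1,n_1)\in Q,\ (\xi-\xi_1,n-n_1)\in Q}\frac{d\zeta_1}{(1+|\sigma_1|)^{2b_2}(1+|\sigma_2|)^{2b_2}}
\]
and a remaining factor which, after integrating in $\zeta$ and recognizing a convolution of the two $L^1$ functions $|F|^2$ with $F=(1+|\sigma|)^{b_2}\hat u$, equals $\|u\|_{X^{0,b_2}(\R\times\R\times\T)}^4$. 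Thus $\|u\|_{L^4}^2\le\big(\sup_\zeta K(\zeta)\big)^{1/2}\|u\|_{X^{0,b_2}}^2$, and the whole lemma is reduced to the uniform bound $\sup_\zeta K(\zeta)\lesssim N$.

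The estimate of $K$ is the heart of the matter, and the hypothesis $b_2>\frac14$ enters here. Carrying out the $\tau_1$-integration first, one has $\sigma_1=\tau_1-(\xi_1^2+n_1^2)$ while $\sigma_1+\sigma_2$ is independent of $\tau_1$; the one-dimensional convolution $\int(1+|\sigma_1|)^{-2b_2}(1+|\sigma_2|)^{-2b_2}\,d\tau_1$ is therefore finite precisely when $4b_2>1$, and is controlled by $(1+|\sigma_1+\sigma_2|)^{1-4b_2}$. It then remains to integrate this over the frequency region cut out by the two membership conditions $(\xi_1,n_1)\in Q$ and $(\xi-\xi_1,n-n_1)\in Q$. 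Since $\sigma_1+\sigma_2=\tau-2|(\xi_1,n_1)-(\xi/2,n/2)|^2-(\xi^2+n^2)/2$, the dispersive weight localizes $(\xi_1,n_1)$ to a thin annulus, and a size count of this region in $\R\times\Z$ against the diameter-$N$ constraint produces the polynomial factor, yielding $\sup_\zeta K(\zeta)\lesssim N$ and hence the claimed $\|u\|_{L^4}\lesssim N^{1/4}\|u\|_{X^{0,b_2}}$.

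I expect the multiplier bound in the last step to be the main obstacle: one must combine the convergence of the modulation integral (which is exactly what forces $b_2>\frac14$) with the geometry of the two frequency-support constraints so as to extract the correct power of $N$, rather than the crude factor coming from the full measure $\sim N^2$ of $Q$. As an alternative and essentially equivalent route, I could interpolate along the $X^{0,b}$-scale between the endpoint $\|u\|_{L^4}\lesssim N^{\varepsilon}\|u\|_{X^{0,1/2+}}$ (obtained by squaring Lemma~\ref{Lemma2.4} with $N_1=N_2=N$) and a low-regularity endpoint $\|u\|_{L^4}\lesssim N^{1/2}\|u\|_{X^{0,1/4+}}$, the latter coming from the spatial Bernstein inequality $\|f\|_{L^4_x(\R\times\T)}\lesssim N^{1/2}\|f\|_{L^2_x(\R\times\T)}$ together with the one-dimensional Sobolev embedding $H^{1/4+}_t\hookrightarrow L^4_t$; the reciprocal trade-off between the gain in $b$ and the loss in $N$ then gives a bound of the same form.
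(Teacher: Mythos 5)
Your reduction of the $L^4$ bound to a kernel estimate is correct and standard: squaring, applying Plancherel, inserting the weights and using Cauchy--Schwarz in $\zeta_1$ does give $\|u\|_{L^4}^2\le\big(\sup_\zeta K(\zeta)\big)^{1/2}\|u\|_{X^{0,b_2}}^2$, and the $\tau_1$-integration converges exactly when $4b_2>1$, with value $\lesssim(1+|\sigma_1+\sigma_2|)^{1-4b_2}$. The gap is the final claim $\sup_\zeta K(\zeta)\lesssim N$. Under your own reading of the hypothesis (a ball $Q$ of diameter $N$, hence of measure $\sim N^2$ in $\R\times\Z$), the annulus count does not produce $N$: the level set $\{(\xi_1,n_1):|\sigma_1+\sigma_2|\sim 2^k\}$ is an annulus of measure $\sim 2^k$, so $K(\zeta)\lesssim\sum_{1\le 2^k\lesssim N^2}2^{k(1-4b_2)}\,2^{k}\sim N^{2(2-4b_2)}$, which for $b_2=\tfrac14+\eps$ is $N^{2-8\eps}$, far larger than $N$; the curvature gain degenerates as $b_2\downarrow\tfrac14$ because the weight $(1+|\sigma_1+\sigma_2|)^{1-4b_2}$ tends to $1$. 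Correspondingly, with $N$ the diameter the stated inequality is in fact false near $b_2=\tfrac14$: testing $u(t,x)=g(t)f(x)$ with $\widehat g=\mathbf 1_{[-N^2,N^2]}$ and $\widehat f=\mathbf 1_Q$ gives $\|u\|_{L^4}/\|u\|_{X^{0,b_2}}\sim N^{1-2b_2}\gg N^{1/4}$. What your argument does prove --- with no annulus count at all, since the weight after the $\tau_1$-integration is $\le1$ --- is $\sup_\zeta K\lesssim|Q|$ and hence $\|u\|_{L^4}\lesssim|Q|^{1/4}\|u\|_{X^{0,b_2}}$, where $|Q|$ denotes the measure of $Q$ in $\R\times\Z$. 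Your fallback route has the same defect: Bernstein together with $H^{1/4+}_t\hookrightarrow L^4_t$ gives the endpoint $N^{1/2}$ at $b=\tfrac14+$, and interpolating with the $b=\tfrac12+$ endpoint can only return $N^{1/2}$ at $b=\tfrac14+$, never $N^{1/4}$.

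For comparison, the paper's proof is a two-line linear argument using no bilinear structure and no dispersion: Hausdorff--Young $\|u\|_{L^4}\le\|\hat u\|_{L^{4/3}}$, then H\"older in $\tau$ using $\int(1+|\tau-\xi^2-n^2|)^{-4b_2}{\rm d}\tau<\infty$ (this is precisely where $b_2>\tfrac14$ enters), then H\"older in $(\xi,n)$ over $Q$, which yields exactly the factor $|Q|^{1/4}$. So the estimate you can actually reach coincides with the paper's, and both read as $N^{1/4}$ only if ``size $N$'' is interpreted as $|Q|\sim N$; with the diameter interpretation the correct power is $N^{1/2}$ for either proof. You should either fix the normalization $|Q|\sim N$ explicitly or state the conclusion as $|Q|^{1/4}$, and in that case the Hausdorff--Young route is strictly simpler than the kernel computation.
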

\begin{proof}
From the Hausdorff - Young and H\"older inequalities, we obtain
$$
\begin{aligned}
\|u\|_{L^4(\R\times\R\times\T)} \lesssim & \Big( \sum_{n\in Q}\int_{\xi\in Q}\int|\hat{u}(\tau,\xi,n)|^{\frac43}{\rm d}\tau{\rm d}\xi \Big)^{\frac34}\\
\lesssim & \Big( \sum_{n\in Q}\int_{\xi\in Q}\big(\int(1+|\tau-\xi^2-n^2|)^{2b_2}|\hat{u}(\tau,\xi,n)|^{2}{\rm d}\tau\big)^{\frac23}{\rm d}\xi \Big)^{\frac34}\\
\lesssim & N^\frac14 \Big( \sum_{n\in Q}\int_{\xi\in Q}\int(1+|\tau-\xi^2-n^2|)^{2b_2}|\hat{u}(\tau,\xi,n)|^{2}{\rm d}\tau{\rm d}\xi \Big)^{\frac12}
\end{aligned}
$$
for $b_2 > \frac14$.
\end{proof}

Then interpolation between Lemma \ref{Lemma2.4} and Lemma \ref{l4' esitimate} yields the following lemma.
\begin{lemma}\label{lemma-main}
For any $0 < s_0 < \frac14$. Assume $supp ~\hat{u}\in Q$ with the ball $Q$ of size $N$, there exists some $b_3\in(\frac{1-2s_0}{2},\frac12)$ satisfying
\begin{equation}
\|u\|_{L^4(\R\times\R\times\T)} \leq C N^{s_0}\|u\|_{X^{0,b_3}(\R\times\R\times\T)}.
\end{equation}
\end{lemma}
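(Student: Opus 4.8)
The plan is to obtain the stated bound by complex interpolation, in the modulation exponent $b$, between two endpoint $L^4$ estimates: an essentially loss-free one hidden in Lemma \ref{Lemma2.4}, and the $\tfrac14$-loss estimate of Lemma \ref{l4' esitimate}. First I would extract the diagonal consequence of Lemma \ref{Lemma2.4}. Taking $v=u$ and $Q_1=Q_2=Q$ there (so $N_1=N_2=N$) and using the identity $\|u^2\|_{L^2(\R\times\R\times\T)}=\|u\|_{L^4(\R\times\R\times\T)}^2$, one gets, for every $\varepsilon>0$ and every $b_1>\tfrac12$,
$$\|u\|_{L^4(\R\times\R\times\T)}\le C_\varepsilon N^{\varepsilon}\|u\|_{X^{0,b_1}(\R\times\R\times\T)},\qquad \mathrm{supp}\,\hat u\subset Q.$$
This is the ``no loss'' endpoint. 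The second endpoint is exactly Lemma \ref{l4' esitimate}: for $b_2>\tfrac14$ one has $\|u\|_{L^4}\le C N^{1/4}\|u\|_{X^{0,b_2}}$.

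Next I would interpolate these two bounds in $b$. The structural point is that, at spatial regularity $s=0$, the Bourgain norm is the weighted $L^2$ norm $\|u\|_{X^{0,b}}=\|(1+|\tau-\xi^2-n^2|)^{b}\hat u\|_{L^2_{\tau,\xi,n}}$, so $\{X^{0,b}\}_b$ is a complex interpolation scale with $[X^{0,b_1},X^{0,b_2}]_\theta=X^{0,b_\theta}$, where $b_\theta=(1-\theta)b_1+\theta b_2$. Applying complex interpolation to the identity map on (Schwartz) functions with Fourier support in $Q$, regarded as a single operator into the fixed target space $L^4$ (for which $[L^4,L^4]_\theta=L^4$), the two endpoint operator norms interpolate to give, for each $\theta\in(0,1)$,
$$\|u\|_{L^4(\R\times\R\times\T)}\le C\,N^{(1-\theta)\varepsilon+\theta/4}\,\|u\|_{X^{0,b_\theta}(\R\times\R\times\T)},\qquad \mathrm{supp}\,\hat u\subset Q.$$

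It remains to choose the parameters so that the $N$-exponent is at most $s_0$ (which suffices, since $N\ge1$) while $b_\theta$ lands in the prescribed window. I would take $\theta$ slightly below $4s_0$ and $\varepsilon$ small, so that $(1-\theta)\varepsilon+\theta/4\le s_0$; this is feasible precisely because the hypothesis $s_0<\tfrac14$ forces $4s_0<1$, keeping $\theta\in(0,1)$. Writing $b_1=\tfrac12+\delta_1$, $b_2=\tfrac14+\delta_2$ and $\theta=4s_0-\eta$ with small $\delta_1,\delta_2,\eta>0$, a direct computation gives
$$b_\theta=\tfrac12-s_0+\tfrac{\eta}{4}+(1-4s_0+\eta)\delta_1+(4s_0-\eta)\delta_2,$$
so $b_\theta>\tfrac12-s_0=\tfrac{1-2s_0}{2}$ automatically, while for small enough $\eta,\delta_1,\delta_2$ the additive corrections stay below $s_0$, giving $b_\theta<\tfrac12$ as well. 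Setting $b_3=b_\theta$ then closes the argument. The only step requiring genuine care is this final bookkeeping: the naive choice $\theta=4s_0$ with $b_1\downarrow\tfrac12$, $b_2\downarrow\tfrac14$ pushes $b_\theta$ exactly onto the forbidden lower endpoint $\tfrac12-s_0$ and simultaneously makes the $N$-exponent overshoot $s_0$ by $(1-4s_0)\varepsilon$, so one must use the small positive slacks $\eta,\delta_1,\delta_2$ and the smallness of $\varepsilon$ in tandem to land $b_3$ strictly inside the open interval $\bigl(\tfrac{1-2s_0}{2},\tfrac12\bigr)$ while pinning the exponent at $s_0$.
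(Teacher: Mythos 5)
Your proposal is correct and is exactly the route the paper takes: the paper proves Lemma \ref{lemma-main} in one line by ``interpolation between Lemma \ref{Lemma2.4} and Lemma \ref{l4' esitimate}'', and you have simply supplied the interpolation-scale identification $[X^{0,b_1},X^{0,b_2}]_\theta=X^{0,b_\theta}$ and the parameter bookkeeping that the paper omits. Your final computation placing $b_3=b_\theta$ strictly inside $\bigl(\tfrac{1-2s_0}{2},\tfrac12\bigr)$ while keeping the exponent at $s_0$ checks out.
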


\section{global~well-posedness}
The idea of the proof of Theorem \ref{GWP} is inspired by Bourgain in \cite{B1}. Before proving this, let us first state the nonlinear estimate in $X^{s,b}$.

\subsection{The~nonlinear~estimate~in~$X^{s,b}$}

\begin{theorem}\label{theorem-nonlinear estimate}
Let $s>0$. There exists $\frac12 < b \leq b'$ and $C > 0$ such that for $u_j, j=1,2,3$ in $X^{s,b}(\R\times\R\times\T)$ satisfying
\begin{equation}\label{nonlinear estimate}
\|u_1\bar{u}_2u_3\|_{X^{s,b'-1}(\R\times\R\times\T)} \leq C \|u_1\|_{X^{s,b}(\R\times\R\times\T)} \|u_2\|_{X^{s,b}(\R\times\R\times\T)} \|u_3\|_{X^{s,b}(\R\times\R\times\T)}.
\end{equation}
\end{theorem}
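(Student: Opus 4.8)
The plan is to prove \eqref{nonlinear estimate} by duality followed by a dyadic case analysis organized by the frequency interaction. Since $b'-1<0$, I would first dualize the $X^{s,b'-1}$ norm,
\[
\|u_1\bar u_2 u_3\|_{X^{s,b'-1}(\R\times\R\times\T)}=\sup_{\|v\|_{X^{-s,1-b'}}\le1}\Big|\int_{\R\times\R\times\T}u_1\bar u_2 u_3\,\bar v\,{\rm d}x\,{\rm d}t\Big|,
\]
so that it suffices to bound the spacetime integral by $\|u_1\|_{X^{s,b}}\|u_2\|_{X^{s,b}}\|u_3\|_{X^{s,b}}\|v\|_{X^{-s,1-b'}}$. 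I would then Littlewood--Paley decompose all four factors, writing the frequencies as $a_j=(\xi_j,n_j)$ with $|a_j|\sim N_j$ (for $j=1,2,3$ the inputs, $j=0$ the dual factor $v$), and record the convolution constraint $a_1-a_2+a_3-a_0=0$ forced by the integral. This constraint makes the two largest of $N_0,N_1,N_2,N_3$ comparable and gives $\langle a_0\rangle^{s}\lesssim\max_j\langle a_j\rangle^{s}$; placing the $s$-derivative on the factor carrying the top frequency removes the Sobolev weight and reduces matters to a bound that is scale invariant at the top frequency $N_{\mathrm{top}}$.

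The case analysis is then governed by how many frequencies are comparable to $N_{\mathrm{top}}$. When at least two of the \emph{good} factors $u_1,u_2,u_3$ (modulation exponent $b>\tfrac12$) attain $N_{\mathrm{top}}$, I would pair two of them and estimate their product in $L^2$ by the bilinear bound of Lemma \ref{Lemma2.4}, and pair $v$ with the remaining factor, estimating that product by H\"older together with the interpolated $L^4$ bound of Lemma \ref{lemma-main}, taking $b_3=1-b'$ slightly below $\tfrac12$ (forcing $b'\in(\tfrac12,\tfrac12+s_0)$). The Strichartz loss $N^{\varepsilon}$ and the $L^4$ loss $N^{s_0}$ then fall on frequencies $\le N_{\mathrm{top}}$, and whenever three or more frequencies are comparable the two positive Sobolev weights outweigh the single negative weight of $v$, leaving a net power $N_{\mathrm{top}}^{+s}$ that absorbs these losses; choosing $s_0,\varepsilon$ small relative to $s$, the resulting dyadic series sums by Cauchy--Schwarz.

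The delicate case, which I expect to be the main obstacle, is the high--low--low interaction in which exactly two frequencies are comparable and one of them is $v$: the output concentrates at a single high input frequency $N_{\mathrm{top}}$ while the other two inputs are strictly lower. Here Lemma \ref{Lemma2.4} is unavailable for any pair containing $v$, since $v\in X^{0,1-b'}$ with $1-b'<\tfrac12$; pairing the two high factors $u_1$ and $v$ one is forced onto $L^4\times L^4$ via Lemma \ref{lemma-main}, which costs $N_{\mathrm{top}}^{2s_0}$, and at the top frequency the Sobolev weights cancel exactly ($N_{\mathrm{top}}^{s}N_{\mathrm{top}}^{-s}=1$), so nothing is left to absorb this loss. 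To recover it I would decompose each factor in modulation $\langle\tau_j-\xi_j^2-n_j^2\rangle\sim M_j$ and use the resonance identity $\sigma_1-\sigma_2+\sigma_3-\sigma_0=-\Omega$ with $|\Omega|=2|(a_1-a_2)\cdot(a_3-a_2)|$: away from the near-resonant region one has $\max_j M_j\gtrsim|\Omega|$, and the surplus modulation weight (using the cruder Lemma \ref{l4' esitimate}, which only needs exponent $>\tfrac14$, and pocketing the difference) yields a positive power of $N_{\mathrm{top}}$ that dominates $N_{\mathrm{top}}^{2s_0}$.

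The genuinely hard sub-case is the near-resonant region (in particular $a_2\approx a_3$, equivalently $a_0\approx a_1$), where $\Omega$ is too small to furnish any modulation gain; this is precisely the resonant self-interaction $\bar u_2u_3\approx|u_2|^2$, and it must be controlled directly through the counting/transversality geometry on the paraboloid over $\R\times\T$ that underlies Lemma \ref{Lemma2.4}, following Bourgain's argument in \cite{B1}. Assembling the non-resonant modulation pieces with the resonant piece, and optimizing $s_0$ and $\varepsilon$, then yields \eqref{nonlinear estimate} for a suitable range $\tfrac12<b\le b'$.
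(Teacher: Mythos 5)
Your overall frame (duality, Littlewood--Paley decomposition of all four factors, placing the Sobolev weight on the factor carrying the top frequency) matches the paper's, but there is a genuine gap exactly where you flag the difficulty, and it is not closed. In the high--low--low case you correctly observe that pairing the two top-frequency factors $u_1$ and $v$ via Lemma \ref{lemma-main} applied to the full annulus costs $N_{\mathrm{top}}^{2s_0}$ with the Sobolev weights cancelling, and you propose to recover this through a modulation/resonance decomposition, deferring the near-resonant region to an unspecified ``counting/transversality'' argument following Bourgain. That deferred step is precisely the hardest part of your own outline and is left as a black box, so as written the proof does not close. (The non-resonant gain also needs care: with $a_2,a_3$ low and $a_1$ high, the resonance function $\Omega=2(a_1-a_2)\cdot(a_3-a_2)$ vanishes on a codimension-one set of directions, so the surplus modulation is not uniformly a positive power of $N_{\mathrm{top}}$.)

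The paper avoids this difficulty entirely by one further decomposition that your proposal misses: after fixing the dyadic annuli with $k_1\ge k_2\ge k_3$ and putting the largest input frequency $(\xi_1,n_1)$ in $D_{k_1}$, it tiles $D_{k_1}=\bigcup_\alpha Q_\alpha$ by balls of radius $2^{k_2}$, the \emph{second} largest frequency. The convolution constraint forces the output frequency $(\xi,n)$ into (an enlargement of) the same ball $Q_\alpha$ as $(\xi_1,n_1)$, so all four factors $F_\alpha,G_\alpha,H_2,H_3$ have Fourier support in balls of radius at most $2^{k_2}$. Since the losses in Lemma \ref{Lemma2.4} and Lemma \ref{lemma-main} depend only on the diameter of the frequency support and not on its location (by translation in frequency), each $L^4$ application costs at most $2^{k_2 s_0}$ rather than $2^{k_1 s_0}$; these losses land on $u_2$ and $u_3$ and are absorbed by $\|u_2\|_{X^{s,b}}\|u_3\|_{X^{s,b}}$ once $s_0\le s/4$, the $\alpha$-sum is handled by Cauchy--Schwarz (almost orthogonality of the $Q_\alpha$), and no modulation or resonance analysis is needed anywhere. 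Replacing your modulation argument in the high--low--low case by this ball decomposition at the scale of the second largest frequency is the missing ingredient.
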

\begin{proof}
By duality, we have
\begin{align}\label{eq-3.2}
& ~\|u_1\bar{u}_2u_3\|_{X^{s,b'-1}(\R\times\R\times\T)}\nonumber\\
= & ~\Big( \sum_n\int\int (1+|\tau-\xi^2-n^2|)^{2(b'-1)}(1+|\xi|+|n|)^{2s}\big| \widehat{(u_1\bar{u}_2u_3)}(\tau,\xi,n) \big|^2{\rm d}\tau{\rm d}\xi \Big)^{\frac12}\nonumber\\
= & ~\sup_{\|\hat{v}\|_{L^2}=1}\sum_n\int\int\sum_{n_1}\int\int\sum_{n_2}
\int\int(1+|\tau-\xi^2-n^2|)^{(b'-1)}(1+|\xi|+|n|)^{s}\nonumber\\
& ~\times \hat{u}_1(\tau_1,\xi_1,n_1)\bar{\hat{u}}_2(-\tau_2,-\xi_2,-n_2)
\hat{u}_3(\tau-\tau_1-\tau_2,\xi-\xi_1-\xi_2,n-n_1-n_2)\nonumber\\
& \qquad\qquad\qquad~\times \bar{\hat{v}}(\tau,\xi,n){\rm d}\tau_1{\rm d}\xi_1{\rm d}\tau_2{\rm d}\xi_2{\rm d}\tau{\rm d}\xi\nonumber\\
\leq & ~\sup_{\|\hat{v}\|_{L^2}=1}\sum\int\int\frac{|\bar{\hat{v}}(\tau,\xi,n)|}{(1+|\tau-\xi^2-n^2|)^{(1-b')}}(1+|\xi|+|n|)^{s}|\hat{u}_1(\tau_1,\xi_1,n_1)|
\nonumber\\
& \qquad\qquad\qquad ~\times |\bar{\hat{u}}_2(-\tau_2,-\xi_2,-n_2)||\hat{u}_3(\tau_3,\xi_3,n_3)|{\rm d}\tau_1{\rm d}\xi_1{\rm d}\tau_2{\rm d}\xi_2{\rm d}\tau_3{\rm d}\xi_3,
\end{align}
where $n=n_1-n_2+n_3$, $\xi=\xi_1-\xi_2+\xi_3$, and $\tau=\tau_1-\tau_2+\tau_3$.

Observe that
\begin{equation*}
|\xi|+|n| \leq C\max\{|\xi_1|+|n_1|,|\xi_2|+|n_2|,|\xi_3|+|n_3|\},
\end{equation*}
and
\begin{equation*}
(1+|\xi|+|n|)^{s} \leq C\max\{(1+|\xi_1|+|n_1|)^{s},(1+|\xi_2|+|n_2|)^{s},(1+|\xi_3|+|n_3|)^{s}\}.
\end{equation*}
Due to symmetry, we can assume $(1+|\xi|+|n|)^{s} \lesssim (1+|\xi_1|+|n_1|)^{s}$. Then \eqref{eq-3.2} is bounded by
\begin{equation}
\begin{aligned}\label{eq-3.3}
&\sup_{\|\hat{v}\|_{L^2}=1}\sum_{n_1,n_2,n_3}\int\int\frac{|\bar{\hat{v}}(\tau,\xi,n)|}{(1+|\tau-\xi^2-n^2|)^{(1-b')}}(1+|\xi_1|+|n_1|)^{s}\\
~\times &|\hat{u}_1(\tau_1,\xi_1,n_1)||\bar{\hat{u}}_2(-\tau_2,-\xi_2,-n_2)||\hat{u}_3(\tau_3,\xi_3,n_3)|{\rm d}\tau_1{\rm d}\xi_1{\rm d}\tau_2{\rm d}\xi_2{\rm d}\tau_3{\rm d}\xi_3.
\end{aligned}
\end{equation}

For each $(\xi_i,n_i)$-index $i=1,2,3$, by a standard dyadic partitioning as in Littlewood-paley theory, the $\R\times\Z$-index set can be divided into dyadic regions,
\begin{equation*}
\R\times\Z = \bigcup_{k\in\N}D_k, \quad \text{and} \quad D_k = \big\{ (\xi,n)\in\R\times\Z: (1+|\xi|+|n|)\sim2^k \big\}.
\end{equation*}
Write
\begin{equation*}
\sum\int_{(\xi_i,n_i)\in\R\times\Z} = \sum_{k_1\geq k_2\geq k_3}\sum\int_{(\xi_i,n_i)\in D_{k_i}} \quad \text{for}~ i=1,2,3.
\end{equation*}
For $k_1\geq k_2 \geq k_3$, we make a further of $D_{k_1} = \bigcup_{\alpha} Q_{\alpha}$ in ball of size $2^{k_2}$. One may write
\begin{equation}
\begin{aligned}\label{eq-3.4}
& 2^{k_1s}\sum\int_{(\xi_i,n_i)\in D_{k_i}}\int\frac{|\bar{\hat{v}}(\tau,\xi,n)|}{(1+|\tau-\xi^2-n^2|)^{(1-b')}}|\hat{u}_1(\tau_1,\xi_1,n_1)||\bar{\hat{u}}_2(-\tau_2,-\xi_2,-n_2)|\\
& \qquad\qquad\qquad \times|\hat{u}_3(\tau_3,\xi_3,n_3)|{\rm d}\tau_1{\rm d}\xi_1{\rm d}\tau_2{\rm d}\xi_2{\rm d}\tau_3{\rm d}\xi_3 \sim\\
& 2^{k_1s}\sum_\alpha \sum \int_{(\xi,n),(\xi_1,n_1)\in Q_\alpha, (\xi_2,n_2)\in D_{k_2}, (\xi_3,n_3)\in D_{k_3}}
\int\frac{|\bar{\hat{v}}(\tau,\xi,n)|}{(1+|\tau-\xi^2-n^2|)^{(1-b')}}\\
& \qquad \times|\hat{u}_1(\tau_1,\xi_1,n_1)||\bar{\hat{u}}_2(-\tau_2,-\xi_2,-n_2)||\hat{u}_3(\tau_3,\xi_3,n_3)|{\rm d}\tau_1{\rm d}\xi_1{\rm d}\tau_2{\rm d}\xi_2{\rm d}\tau_3{\rm d}\xi_3.
\end{aligned}
\end{equation}

Next we recall a fact from Fourier analysis. For simplicity, we suppose that $f_1,f_2,f_3,f_4$ are functions on $\R^2$ and all $\widehat{f_j}$ with $j=1,2,3,4$ are real-valued. One has
\begin{equation}\label{eq-3.5'}
\int f_1\bar{f_2}f_3\bar{f_4}{\rm d}x = \int_{\xi_1-\xi_2+\xi_3-\xi_4=0}\widehat{f_1}(\xi_1)\widehat{\bar{f_2}}(\xi_2)\widehat{f_3}(\xi_3)
\widehat{\bar{f_4}}(\xi_4){\rm d}\xi_1{\rm d}\xi_2{\rm d}\xi_3{\rm d}\xi_4.
\end{equation}

Using the analogue of \eqref{eq-3.5'} for the space-time Fourier transform on $\R\times\R\times\T$, thus we collect
\begin{equation}\label{eq-3.5}
\eqref{eq-3.4} \leq \int\int F_\alpha(x,t) G_\alpha(x,t) H_2(x,t) H_3(x,t) {\rm d}x{\rm d}t,
\end{equation}
where
$$
\begin{aligned}
F_\alpha(x,t) = & \sum\int_{(\xi,n)\in Q_\alpha}\int\frac{|\bar{\hat{v}}(\tau,\xi,n)|}{(1+|\tau-\xi^2-n^2|)^{(1-b')}}e^{i((\xi,n)x+\tau t)}{\rm d}\tau{\rm d}\xi,\\
G_\alpha(x,t) = & \sum\int_{(\xi,n)\in Q_\alpha}\int|\hat{u}_1(\tau,\xi,n)|e^{i((\xi,n)x+\tau t)}{\rm d}\tau{\rm d}\xi,\\
H_2(x,t) = & \sum\int_{(\xi,n)\in D_{k_2}}\int|\hat{\bar{u}}_2(\tau,\xi,n)|e^{i((\xi,n)x+\tau t)}{\rm d}\tau{\rm d}\xi,\\
H_3(x,t) = & \sum\int_{(\xi,n)\in D_{k_3}}\int|\hat{u}_3(\tau,\xi,n)|e^{i((\xi,n)x+\tau t)}{\rm d}\tau{\rm d}\xi.
\end{aligned}
$$
Thus \eqref{eq-3.5} is bounded by
\begin{equation}\label{eq-3.6}
\eqref{eq-3.5} \leq C\|F_\alpha\|_{L^4}\|G_\alpha\|_{L^4}\|H_2\|_{L^4}\|H_3\|_{L^4}.
\end{equation}
Choose $0<s_0<\frac14$ and $\frac{2-s_0}{2}<b_3<\frac12$ satisfing Lemma \ref{lemma-main}. Then
$$
\begin{aligned}
\|G_\alpha\|_{L^4(\R\times\R\times\T)} \leq & C2^{k_2s_0}\Big( \sum\int_{(\xi,n)\in Q_\alpha}\int(1+|\tau-\xi^2-n^2|)^{2b_3}|\hat{u}_1(\tau,\xi,n)|^2{\rm d}\tau{\rm d}\xi \Big)^\frac12,\\
\|H_i\|_{L^4(\R\times\R\times\T)} \leq & C2^{k_is_0}\Big( \sum\int_{(\xi,n)\in D_{k_i}}\int(1+|\tau-\xi^2-n^2|)^{2b_3}|\hat{u}_i(\tau,\xi,n)|^2{\rm d}\tau{\rm d}\xi \Big)^\frac12
\end{aligned}
$$
for $i=2,3$. And
\begin{equation}\label{eq-3.8'}
\|F_\alpha\|_{L^4(\R\times\R\times\T)} \leq C2^{k_2s_0}\Big( \sum\int_{(\xi,n)\in Q_\alpha}\int(1+|\tau-\xi^2-n^2|)^{2b_3}|\widehat{F}(\tau,\xi,n)|^2{\rm d}\tau{\rm d}\xi \Big)^\frac12.
\end{equation}

Consequently, summing over $\alpha$, and assuming $1-b'>b_3$, we have
\begin{equation}
\begin{aligned}\label{eq-3.7}
& \eqref{eq-3.6} \leq  C2^{2k_2s_0}\|v|_{D_{k_1}}\|_{L^2}\\
& \times \Big( \sum\int_{(\xi,n)\in D_{k_1}}\int(1+|\tau-\xi^2-n^2|)^{2b_3}(1+|\xi|+|n|)^{2s}|\hat{u}_1(\tau,\xi,n)|^2{\rm d}\tau{\rm d}\xi \Big)^\frac12\\
& \times \Big( \sum\int_{(\xi,n)\in D_{k_2}}\int(1+|\tau-\xi^2-n^2|)^{2b_3}(1+|\xi|+|n|)^{2s_0}|\hat{u}_2(\tau,\xi,n)|^2{\rm d}\tau{\rm d}\xi \Big)^\frac12\\
& \times \Big( \sum\int_{(\xi,n)\in D_{k_3}}\int(1+|\tau-\xi^2-n^2|)^{2b_3}(1+|\xi|+|n|)^{2s_0}|\hat{u}_3(\tau,\xi,n)|^2{\rm d}\tau{\rm d}\xi \Big)^\frac12.
\end{aligned}
\end{equation}
Note that $k_1 \geq k_2 \geq k_3$, which yields
$$
\begin{aligned}
& \eqref{eq-3.7} \leq C\|v_{D_{k_1}}\|_{L^2}\\
& \times \Big( \sum\int_{(\xi,n)\in D_{k_1}}\int(1+|\tau-\xi^2-n^2|)^{2b_3}(1+|\xi|+|n|)^{2s}|\hat{u}_1(\tau,\xi,n)|^2{\rm d}\tau{\rm d}\xi \Big)^\frac12\\
& \times \Big( \sum\int_{(\xi,n)\in D_{k_2}}\int(1+|\tau-\xi^2-n^2|)^{2b_3}(1+|\xi|+|n|)^{6s_0}|\hat{u}_2(\tau,\xi,n)|^2{\rm d}\tau{\rm d}\xi \Big)^\frac12\\
& \times \Big( \sum\int_{(\xi,n)\in D_{k_3}}\int(1+|\tau-\xi^2-n^2|)^{2b_3}(1+|\xi|+|n|)^{2s_0}|\hat{u}_3(\tau,\xi,n)|^2{\rm d}\tau{\rm d}\xi \Big)^\frac12.
\end{aligned}
$$
Here, summation on $k_1 \geq k_2 \geq k_3$ permits to bound \eqref{eq-3.2} by
\begin{equation}
\begin{aligned}\label{eq-3.8}
\eqref{eq-3.2} \leq & C\Big( \sum_n\int\int(1+|\tau-\xi^2-n^2|)^{2b_3}(1+|\xi|+|n|)^{2s}|\hat{u}(\tau,\xi,n)|^2{\rm d}\tau{\rm d}\xi \Big)^\frac12\\
& \times \Big( \sum_n\int\int(1+|\tau-\xi^2-n^2|)^{2b_3}(1+|\xi|+|n|)^{8s_0}|\hat{u}_2(\tau,\xi,n)|^2{\rm d}\tau{\rm d}\xi \Big)^\frac12,\\
& \times \Big( \sum_n\int\int(1+|\tau-\xi^2-n^2|)^{2b_3}(1+|\xi|+|n|)^{8s_0}|\hat{u}_3(\tau,\xi,n)|^2{\rm d}\tau{\rm d}\xi \Big)^\frac12.
\end{aligned}
\end{equation}
Fixing $s>0$ and letting $s_0 = \min{\{\frac{s}{4},\frac14\}}$, we can take \textcolor[rgb]{1.00,0.00,0.00}{$b=b' = b(s)>\frac12$} such that $1-b(s)>\frac{1-2s_0}{2}$ with $b_3\in(\frac{1-2s_0}{2},\frac12)$. Therefore, from \eqref{eq-3.8}, we get
\begin{equation}\label{eq-3.9}
\begin{aligned}
\|u_1\bar{u}_2u_3\|_{X^{s,b'-1}(\R\times\R\times\T)} \leq & ~C \|u_1\|_{X^{s,b_3}(\R\times\R\times\T)} \|u_2\|_{X^{s_0,b_3}(\R\times\R\times\T)} \|u_3\|_{X^{s_0,b_3}(\R\times\R\times\T)}\\
\leq & ~C \|u_1\|_{X^{s,b}(\R\times\R\times\T)} \|u_2\|_{X^{s,b}(\R\times\R\times\T)} \|u_3\|_{X^{s,b}(\R\times\R\times\T)}.
\end{aligned}
\end{equation}
Thus the proof is completed.
\end{proof}

\subsection{the~proof~of~theorem~\ref{GWP}}
In fact, Takaoka and Tzvetkov in \cite{TT} determined the $L^2$ global well-posedness results for \eqref{eq1.1} with small data. In this subsection, we shall state the well-posedness for $u_0\in H^s,s>0$ briefly.

First, we apply a fixed point argument for the integral equation corresponding to equation \eqref{eq1.1}
\begin{equation}\label{eq-duhamel}
u(t) = e^{it\Delta_{\R\times\T}}u_0 - i\int_0^te^{i(t-s)\Delta_{\R\times\T}}(|u|^2u)(s){\rm d}s.
\end{equation}
Recall that $\phi\in C_0^\infty(\R)$ is a cut-off function such that $supp~\phi \subset(-2,2)$, $\phi\equiv1$ on the interval $[-1,1]$, and $\phi_\delta(t) = \phi(t/\delta)$ for $\delta>0$. Consider a truncated version of \eqref{eq-duhamel}
\begin{equation}\label{eq-truncated duhamel}
u(t) = \phi(t)e^{it\Delta_{\R\times\T}}u_0 - i\phi_\delta(t)\int_0^te^{i(t-s)\Delta_{\R\times\T}}(|u|^2u)(s){\rm d}s.
\end{equation}
We solve \eqref{eq-truncated duhamel} by the fixed point argument in $X^{s,b}_\delta(\R\times\T)$ for suitable $s>0, b>\frac12$ and sufficiently small $\delta>0$. We first state some estimates from Lemma 3.1 and Lemma 3.2 in \cite{G}.
\begin{lemma}\label{lemma-ginibre}
For $b'\geq b > \frac12$, and $0<\delta\leq1$. Then it holds
$$
\begin{aligned}
\|\phi(t)e^{it\Delta_{\R\times\T}}u_0\|_{X^{s,b}(\R\times\R\times\T)} \leq & C\|u_0\|_{H^s(\R\times\T)},\\
\big\| \phi_\delta(t)\int_0^te^{i(t-s)\Delta_{\R\times\T}}(|u|^2u)(s){\rm d}s \big\|_{X^{s,b}(\R\times\R\times\T)} \leq & C \delta^{b'-b}\||u|^2u\|_{X^{s,b'-1}(\R\times\R\times\T)},
\end{aligned}
$$
provided $s\in\R$ and some constants $C>0$.
\end{lemma}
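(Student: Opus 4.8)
\emph{Plan.} Both inequalities are the classical homogeneous (energy) and inhomogeneous (Duhamel) estimates for the Bourgain space, and the natural strategy, following \cite{G}, is to conjugate away the free evolution and reduce everything to one-dimensional estimates in the time variable. The tool is Proposition \ref{Prop-2.2}(1), which after removing the free propagator identifies $\|w\|_{X^{s,b}(\R\times\R\times\T)}\sim\|w\|_{H^b(\R_t;H^s(\R\times\T))}^{\sharp}$, where $\sharp$ denotes the function obtained from $w$ by stripping off $e^{it\Delta_{\R\times\T}}$. For the first inequality, apply this to $w=\phi(t)e^{it\Delta_{\R\times\T}}u_0$: the conjugation cancels the propagator and leaves the tensor product $\phi(t)u_0$, so that
\begin{equation*}
\|\phi(t)e^{it\Delta_{\R\times\T}}u_0\|_{X^{s,b}(\R\times\R\times\T)}\sim\|\phi(t)u_0\|_{H^b_tH^s_x}=\|\phi\|_{H^b(\R)}\,\|u_0\|_{H^s(\R\times\T)},
\end{equation*}
and since $\phi\in C_0^\infty(\R)$ we have $\|\phi\|_{H^b(\R)}<\infty$ for every $b$. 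This gives the first bound.

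For the Duhamel bound, set $F=|u|^2u$ and let $G$ be the function obtained from $F$ by removing the free evolution, so that $\|G\|_{H^{b'-1}_tH^s_x}\sim\|F\|_{X^{s,b'-1}}$. Using $e^{-it\Delta_{\R\times\T}}e^{i(t-s)\Delta_{\R\times\T}}=e^{-is\Delta_{\R\times\T}}$, the conjugation turns the Duhamel term into $\phi_\delta(t)\int_0^tG(s)\,{\rm d}s$. Since the operations $\int_0^t(\cdot)\,{\rm d}s$ and multiplication by $\phi_\delta$ act only in time, after taking the spatial Fourier transform they decouple over the frequency $(\xi,n)$; using Plancherel in $x$ and the common weight $(1+|\xi|+|n|)^{2s}$ on both sides, the estimate reduces to the scalar-in-time inequality
\begin{equation*}
\Big\|\phi_\delta(t)\int_0^tg(s)\,{\rm d}s\Big\|_{H^b(\R)}\le C\,\delta^{b'-b}\,\|g\|_{H^{b'-1}(\R)},\qquad b'\ge b>\tfrac12,\ 0<\delta\le1,
\end{equation*}
applied to $g(\cdot)=\widehat G(\cdot,\xi,n)$ for each fixed $(\xi,n)$ and then integrated against the weight. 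This scalar statement is exactly Lemma 3.1--3.2 of \cite{G}.

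To prove the scalar inequality, write $W(t)=\int_0^tg(s)\,{\rm d}s$, so that $W(0)=0$; the gain $\delta^{b'-b}$ ultimately originates from this vanishing together with the scaling of the cutoff, since heuristically $\phi_\delta(t)W(t)\approx\phi_\delta(t)\,t\,W'(0)$ and $\|\phi_\delta(t)\,t\|_{H^b}\lesssim\delta^{\frac32-b}\le\delta^{b'-b}$ for $b'\le\frac32$ and $0<\delta\le1$. To make this rigorous I would pass to the time Fourier side, $W(t)=\int_{\R}\frac{e^{it\tau}-1}{i\tau}\widehat g(\tau)\,{\rm d}\tau$, and split into $|\tau|\lesssim1$ and $|\tau|\gtrsim1$. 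On $|\tau|\lesssim1$ the Taylor expansion $\frac{e^{it\tau}-1}{i\tau}=\sum_{k\ge1}\frac{i^{k-1}}{k!}t^k\tau^{k-1}$ starts at order $t$ and reduces the contribution to a sum $\sum_{k\ge1}m_k\,\phi_\delta(t)\,t^k$ with $|m_k|\lesssim C^k\|g\|_{H^{b'-1}}$ (Cauchy--Schwarz on a bounded set, using $b'-1>-\tfrac12$); combined with $\|\phi_\delta(t)\,t^k\|_{H^b}\lesssim C^k\delta^{k+\frac12-b}$ this yields a series convergent against $1/k!$ and dominated by its $k=1$ term, giving $\delta^{\frac32-b}\le\delta^{b'-b}$. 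On $|\tau|\gtrsim1$ the multiplier has size $\langle\tau\rangle^{-1}$ and, kept intact so that the antiderivative still vanishes at $t=0$, it trades the $H^{b'-1}$ control of $g$ for an $H^{b'}$ bound on $W$; the relation $b\le b'$ together with the same scaling of $\phi_\delta$ (again exploiting $W(0)=0$) then supplies the remaining power of $\delta$.

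\emph{Main obstacle.} The delicate point is precisely the $\delta^{b'-b}$ gain in the Duhamel estimate, and the crux is to preserve the cancellation $W(0)=0$ throughout. One must keep the full multiplier $\frac{e^{it\tau}-1}{i\tau}$ intact: splitting off the singular constant $\frac{1}{i\tau}$ would produce an uncompensated term of the form $\phi_\delta(t)\cdot\mathrm{const}$, whose $H^b$ norm scales like $\delta^{\frac12-b}$ and blows up for $b>\frac12$. Tracking this cancellation, and verifying that the two frequency regimes combine into a convergent bound with a constant uniform in $0<\delta\le1$, is the heart of the argument; by contrast the homogeneous estimate and the conjugation/Plancherel reduction are routine.
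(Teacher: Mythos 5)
The paper does not prove this lemma at all: it is imported verbatim from Ginibre's Bourbaki notes (``We first state some estimates from Lemma 3.1 and Lemma 3.2 in \cite{G}''), so there is no in-paper argument to compare against. Your sketch reconstructs the standard proof of exactly that cited result: conjugate by the free flow via Proposition \ref{Prop-2.2}(1), observe that the homogeneous term becomes the tensor product $\phi(t)u_0$ with $\|\phi(t)u_0\|_{H^b_tH^s_x}=\|\phi\|_{H^b}\|u_0\|_{H^s}$, and reduce the Duhamel bound by Plancherel in $(\xi,n)$ to the one-dimensional inequality $\big\|\phi_\delta\int_0^tg\big\|_{H^b}\lesssim\delta^{b'-b}\|g\|_{H^{b'-1}}$. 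That reduction and the low-frequency Taylor-expansion analysis (the scaling $\|\phi_\delta(t)t^k\|_{H^b}\lesssim C^k\delta^{k+\frac12-b}$, the uniform bound on $m_k$, and the $1/k!$ summation) are correct.

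The one genuinely thin spot is the regime $|\tau|\gtrsim1$. The assertion that ``the same scaling of $\phi_\delta$ supplies the remaining power of $\delta$'' hides the real difficulty: multiplication by $\phi_\delta$ is \emph{not} bounded on $H^b$ uniformly in $\delta$ when $b>\frac12$, so you cannot simply place $W$ in $H^{b'}$ and multiply. Two standard repairs: (i) split at $|\tau|\sim\delta^{-1}$ rather than at $|\tau|\sim1$; on $|\tau|>\delta^{-1}$ it is then actually safe to separate the constant $-\tfrac1{i\tau}$, because $\big|\int_{|\tau|>\delta^{-1}}\hat g(\tau)\tau^{-1}{\rm d}\tau\big|\lesssim\delta^{b'-\frac12}\|g\|_{H^{b'-1}}$ exactly compensates $\|\phi_\delta\|_{H^b}\sim\delta^{\frac12-b}$ --- so your ``main obstacle'' paragraph is right for low frequencies but overstates the danger at high ones; or (ii) keep your splitting and use the cancellation $W(0)=0$ quantitatively through the H\"older embedding $H^{b'}\hookrightarrow C^{0,b'-\frac12}$, which gives $|W(t)|\lesssim\delta^{b'-\frac12}\|W\|_{H^{b'}}$ on the support of $\phi_\delta$, and then interpolate between the resulting $L^2$ and $H^1$ bounds for $\phi_\delta W$. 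Either way the argument also needs the implicit restrictions $b\le1$ and $b'<\frac32$ (the latter your step $\delta^{\frac32-b}\le\delta^{b'-b}$ already requires); the lemma as stated omits them, but they hold in the paper's application where $b=b'=\frac12+$.
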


Define an operator $\Phi$ as
\begin{equation}\label{eq-3.14}
\Phi(u(t)) := \phi(t)e^{it\Delta_{\R\times\T}}u_0 - i\phi_\delta(t)\int_0^te^{i(t-s)\Delta_{\R\times\T}}(|u|^2u)(s){\rm d}s.
\end{equation}
Lemma \ref{lemma-ginibre}, Theorem \ref{theorem-nonlinear estimate} and Lemma \ref{lemma-t} yield
\begin{equation}
\|\Phi(u)\|_{X^{s,b}_\delta(\R\times\T)} \leq C\|u_0\|_{H^s(\R\times\T)} + C\delta^{b'-b}\|u\|_{X^{s,b}_\delta(\R\times\T)}^3
\end{equation}
for $b'\geq b >\frac12$. Similarly
\begin{equation}
\|\Phi(u) - \Phi(v)\|_{X^{s,b}_\delta(\R\times\T)} \leq C\delta^{b'-b}\big( \|u\|_{X^{s,b}_\delta(\R\times\T)}^2 + \|v\|_{X^{s,b}_\delta(\R\times\T)}^2 \big)\|u-v\|_{X^{s,b}_\delta(\R\times\T)}.
\end{equation}
Accordingly, choose $C\delta^{b'-b} = \frac{1}{2(C\|u_0\|_{H^s(\R\times\T)}^2)}$, the contraction principle applying to prove local well-posedness in the space $X^{s,b}_\delta(\R\times\T)$ for data $u_0\in H^s(\R\times\T)$.

Next we prove global well-posedness. It suffices to show that the size $\delta$ of the time interval, on which   the local well-posedness result is known to hold, only dependents on a conserved quantity. For $s=1$, the conservation of energy yields a bound on the $H^1$-norm of $u(t)$, one has global well-posedness for $H^1$-data. For $s>1$, choose $s_0=\frac14$ in the second factor of \eqref{eq-3.9} so that one gets
\begin{equation}
\|u\|_{X^{s,b}_\delta(\R\times\T)} \leq C\|u_0\|_{H^s(\R\times\T)} + C\delta^{b'-b}\|u\|_{X^{1,b_3}_\delta(\R\times\T)}^2\|u\|_{X^{s,b_3}_\delta(\R\times\T)}
\end{equation}
for $u_0\in H^s$. Hence, a bound on $\|u\|_{X^{s,b}_\delta(\R\times\T)}$ for a time interval of size $\delta$ only dependents on $\|u_0\|_{H^1}$. This completes the proof of Theorem \ref{GWP}.

\section{growth~of~sobolev~norm~$\|u(t)\|_{H^s}$~with~$s>1$}
In this section, we mainly prove Theorem \ref{growth}. Without loss of generality, we just consider the case $t \to + \infty$. According to the section 3, the local well-posedness for the solution $u$ can be established on consecutive intervals $I=I_j=[t_j,t_{j+1}]$ which length $|I_j|=t_{j+1}-t_{j}$ is bounded by $\|u_0\|_{H^1}$.

The essential point is to establish an inequality
\begin{equation}\label{eq-4.1}
\|u(t_{j+1})\|_{H^s}^2 \leq \|u(t_{j})\|_{H^s}^2 + C\|u(t_{j})\|_{H^s}^{2-\gamma}
\end{equation}
for some fixed $\gamma = \gamma(s)>0$. From \eqref{eq-4.1} and by induction, we have
\begin{equation}
\|u(t)\|_{H^s(\R\times\T)}\leq C|t|^{\frac{1}{\gamma}}, \quad \text{for} \quad |t|\gg1,
\end{equation}
with $s>1$. One can refer to the details to Chen and the first author in \cite{CD}.

Now we will give the proof of \eqref{eq-4.1} under the condition that $s\in\N$ for simplicity. Denote by $\langle \cdot,\cdot\rangle$ the inner product in $L^2$ and $\dot{u}(t)=\partial_tu(t)$, then write
\begin{equation}
\begin{aligned}\label{eq-4.3}
\|u(t_{j+1})\|_{H^s}^2 - \|u(t_{j})\|_{H^s}^2 = & \int_{t_j}^{t_{j+1}}\big( \frac{d}{dt}\|u(t)\|_{H^s}^2 \big){\rm d}t\\
= & 2\Re\int_{t_j}^{t_{j+1}} \sum_{|\alpha_1|\leq s}\sum_{|\alpha_2|\leq s} \langle \partial^{\alpha_1}\dot{u}(t), \partial^{\alpha_2}u(t)\rangle{\rm d}t\\
= & 2\Im\int_{t_j}^{t_{j+1}} \sum_{|\alpha_1|\leq s}\sum_{|\alpha_2|\leq s} \langle \partial^{\alpha_1}(|u|^2u)(t), \partial^{\alpha_2}u(t)\rangle{\rm d}t.
\end{aligned}
\end{equation}
The contributions to \eqref{eq-4.3} are summarized as follows:
\begin{enumerate}
\item $I = \Im \int_{t_j}^{t_{j+1}}\int_{\R\times\T}|\sum_{|\alpha|\leq s}\partial^{\alpha}u(t)|^2|u(t)|^2{\rm d}x{\rm d}t$=0.
\item $II = \Im \int_{t_j}^{t_{j+1}}\int_{\R\times\T}\big( \sum_{|\alpha|\leq s}\partial^{\alpha}\bar{u}(t) \big)^2(u(t))^2{\rm d}x{\rm d}t$.
\item
\begin{equation*}
\begin{aligned}
III = & \Im \int_{t_j}^{t_{j+1}}\int_{\R\times\T} \sum_{|\alpha|\leq s}\big(\partial^{\alpha}\bar{u}(t)\big)\\
& \sum_{|\alpha_1| + |\alpha_2| + |\alpha_3| \leq s} \big(\partial^{\alpha_1}u_1(t)\partial^{\alpha_2}u_2(t)\partial^{\alpha_3}u_3(t)\big){\rm d}x{\rm d}t,
\end{aligned}
\end{equation*}
here the partial differential is with respect to the space variable $x$, one of the $u_i = u$ or $\bar{u}$, and $|\alpha_i|<s$, $\sum_{i=1}^3|\alpha_i|=s$, and at most one $|\alpha_i| = 0$.
\end{enumerate}

\subsection{The~contribution~of~$(3)$}
Without loss of generality, we suppose $|\alpha_1|\geq|\alpha_2|\geq|\alpha_3|$. Therefore, $(3)$ can be divided into two cases:
\begin{enumerate}
\item[(a)] $|\alpha_3|>0$.
\item[(b)] $|\alpha_1|\geq|\alpha_2|>0$ and $|\alpha_3|=0$.
\end{enumerate}

Hence, $III\leq III_1+III_2$, $III_1$ corresponds to $(a)$, and $III_2$ corresponds to $(b)$. We first consider the case $III_1$. H\"older's inequality and Theorem \ref{theorem-nonlinear estimate} yield
\begin{equation}
\begin{aligned}\label{eq-4.4'}
III_1 \lesssim & \sum_{|\alpha|\leq s}\big\| \partial^{\alpha}\bar{u} \big\|_{X^{-\varepsilon_1,1-b'}_{I_j}}\sum_{|\alpha_1| + |\alpha_2| + |\alpha_3| \leq s}\big\| \partial^{\alpha_1}u_1\partial^{\alpha_2}u_2\partial^{\alpha_3}u_3 \big\|_{X^{\varepsilon_1,b'-1}_{I_j}}\\
\lesssim & \|u\|_{X^{s-\varepsilon_1,1-b'}_{I_j}}\sum_{|\alpha_1| + |\alpha_2| + |\alpha_3| \leq s}\prod_{i=1}^3\big\| \partial^{\alpha_i}u_i \big\|_{X^{\varepsilon_1,b}_{I_j}}\\
\lesssim & \|u\|_{X^{s-\varepsilon_1,b}_{I_j}}\prod_{i=1}^3\|u_i\|_{X^{|\alpha_i|+\varepsilon_1,b}_{I_j}},
\end{aligned}
\end{equation}
where $\varepsilon_1 >0$ and $b'\geq b>\frac12$. For $1 \leq s-\varepsilon_1 < s$, by interpolation
\begin{equation}
\begin{aligned}\label{eq-4.4}
\|u\|_{X^{s-\varepsilon_1,b}_{I_j}} \lesssim & \|u\|_{X^{s,b}_{I_j}}^{1-\frac{\varepsilon_1}{s-1}}\|u\|_{X^{1,b}_{I_j}}^{\frac{\varepsilon_1}{s-1}} \lesssim  \|u(t_j)\|_{H^s}^{1-\frac{\varepsilon_1}{s-1}}\|u(t_j)\|_{H^1}^{\frac{\varepsilon_1}{s-1}}\\
\lesssim & \|u(t_j)\|_{H^s}^{1-\frac{\varepsilon_1}{s-1}}.
\end{aligned}
\end{equation}
Similarly, since $\varepsilon_1 < 1$, $0<|\alpha_i|+\varepsilon_1<s$, then
\begin{equation}
\begin{aligned}\label{eq-4.5}
\|u_i\|_{X^{|\alpha_i|+\varepsilon_1,b}_{I_j}} \lesssim & \|u_i\|_{X^{s,b}_{I_j}}^{\frac{|\alpha_i|+\varepsilon_1}{s}}\|u_i\|_{X^{0,b}_{I_j}}^{1-\frac{|\alpha_i|+\varepsilon_1}{s}} \\ \lesssim & \|u(t_j)\|_{H^s}^{\frac{|\alpha_i|+\varepsilon_1}{s}}.
\end{aligned}
\end{equation}
Plugging \eqref{eq-4.4} and \eqref{eq-4.5} into \eqref{eq-4.4'} yields
\begin{equation}\label{eq-4.7}
III_1 \lesssim \|u(t_j)\|_{H^s}^{1-\frac{\varepsilon_1}{s-1} + \frac{\sum_{i=1}^3|\alpha_i|+3\varepsilon_1-3}{s-1}} = \|u(t_j)\|_{H^s}^{2-2\frac{1-\varepsilon_1}{s-1}}.
\end{equation}

Similar to case $(a)$, we obtain
\begin{equation}
\begin{aligned}\label{eq-4.8}
III_2 \lesssim & \|u\|_{X^{s-\varepsilon_1,b}_{I_j}}\prod_{i=1}^2\|u_i\|_{X^{|\alpha_i|+\varepsilon_1,b}_{I_j}}\|u_3\|_{X^{\varepsilon,b}}\\
\lesssim & \|u(t_j)\|_{H^s}^{1-\frac{\varepsilon_1}{s-1}+\frac{\sum_{i=1}^2|\alpha_i|+2\varepsilon_1}{s-1}}
= \|u(t_j)\|_{H^s}^{2-\frac{1-\varepsilon_1}{s-1}}.
\end{aligned}
\end{equation}
Therefore, \eqref{eq-4.7} and \eqref{eq-4.8} implies
\begin{equation}
III\leq III_1 + III_2 \lesssim \|u(t_j)\|_{H^s}^{2-\frac{1-\varepsilon_1}{s-1}}.
\end{equation}

\subsection{The~contribution~of~$(2)$}

From the H\"older inequality, we obtain
\begin{equation}\label{eq-4.14}
II \lesssim \|\sum_{|\alpha|\leq s}\partial^\alpha\bar{u}\|_{X^{0,b}_{I_j}}\|\sum_{|\alpha|\leq s}\partial^\alpha\bar{u}(u)^2\|_{X^{0,-b}_{I_j}} \lesssim \|u(t_j)\|_{H^s}\|\sum_{|\alpha|\leq s}\partial^\alpha\bar{u}(u)^2\|_{X^{0,-b}_{I_j}}.
\end{equation}
In order to estimate $\|\sum_{|\alpha|\leq s}\partial^\alpha\bar{u}(u)^2\|_{X^{0,-b}_{I_j}}$, we consider it by duality
\begin{equation}
\begin{aligned}\label{eq-5.2}
\big\|& \sum_{|\alpha|\leq s}\partial^\alpha\bar{u}(u)^2\big\|_{X^{0,-b}_{I_j}} = \sup_{\|\hat{w}\|_{L^2}=1}\sum\int\int\frac{\bar{\hat{w}}(\tau,\xi,n)}{(1+|\tau-\xi^2-n^2|)^b}(1+|\xi|+|n|)^s\\
& \times |\bar{\hat{u}}(-\tau_1,-\xi_1,-n_1)||\hat{u}(\tau_2,\xi_2,n_2)||\hat{u}(\tau_3,\xi_3,n_3)|{\rm d}\tau_1{\rm d}\xi_1{\rm d}\tau_2{\rm d}\xi_2{\rm d}\tau_3{\rm d}\xi_3,
\end{aligned}
\end{equation}
where $n = n_1+n_2+n_3$, $\xi = \xi_1+\xi_2+\xi_3$ and $\tau = \tau_1+\tau_2+\tau_3$.
\begin{enumerate}
  \item[$\mathbf{Case~1}$.]
       Suppose $(1+|\xi_2|+|n_2|) \geq C(1+|\xi_1|+|n_1|)^{\theta}$ or $(1+|\xi_3|+|n_3|) \geq C(1+|\xi_1|+|n_1|)^{\theta}$, where $0<\theta<1$ is a small parameter and $C$ that will be fixed later.

        Without loss of generality, take $(1+|\xi_3|+|n_3|) \geq C(1+|\xi_1|+|n_1|)^{\theta}$ for example, and the another case could be treated in the same way. Similar to estimate \eqref{eq-3.3}, we also can estimate \eqref{eq-5.2} by
      \begin{equation}
      \eqref{eq-5.2} \lesssim \|u\|_{X^{s,b_3}_{I_j}}\|u\|_{X^{s_0,b_3}_{I_j}}^2
      \end{equation}
      for any $0<s_0<\frac14$ and $b_3<\frac12<b$. From $(1+|\xi_3|+|n_3|)^{s_0-1} \leq  C^{s_0-1}(1+|\xi_1|+|n_1|)^{\theta(s_0-1)}$, the formula \eqref{eq-3.8}, and $\|u\|_{X^{1,b}} \lesssim \|u_0\|_{H^1}$, we obtain
      \begin{equation}
      \begin{aligned}\label{eq-4.13}
      \eqref{eq-5.2} \lesssim & \|u\|_{X^{s-\theta(1-s_0),b_3}_{I_j}}\|u\|_{X^{s_0,b_3}_{I_j}}\|u\|_{X^{1,b_3}_{I_j}} \lesssim \|u\|_{X^{s-\theta(1-s_0),b}_{I_j}}\|u\|_{X^{1,b}_{I_j}}^2\\
      \lesssim & \|u\|_{X^{s,b}_{I_j}}^{1-\frac{\theta(1-s_0)}{s-1}} \lesssim \|u(t_j)\|_{H^s}^{1-\frac{\theta(1-s_0)}{s-1}}.
      \end{aligned}
      \end{equation}

  \item[$\mathbf{Case~2}$.] Suppose $(1+|\xi_2|+|n_2|) < C(1+|\xi_1|+|n_1|)^{\theta}$ and $(1+|\xi_3|+|n_3|) < C(1+|\xi_1|+|n_1|)^{\theta}$.

  \item[$\mathbf{Subcase ~1}$.] Suppose $|\tau_2| \geq C_1(1+|\xi_1|+|n_1|)^2$ or $|\tau_3| \geq C_1(1+|\xi_1|+|n_1|)^2$, where $C_1$ will be fixed later.

  Without loss of generality, take $|\tau_3| \geq C_1(1+|\xi_1|+|n_1|)^2$ for example. Since
  \begin{equation}
  \begin{aligned}\label{eq-5.5}
  1+|\tau_3-\xi_3^2-n_3^2| \geq & |\tau_3| - (1+|\xi_3|+|n_3|)^2 \\
  \geq & C_1(1+|\xi_1|+|n_1|)^2 - C(1+|\xi_1|+|n_1|)^{2\theta}\\
  = & C_1(1+|\xi_1|+|n_1|)^2\big( 1-\frac{C}{C_1}(1+|\xi_1|+|n_1|)^{2\theta-2} \big)\\
  \geq & C'(1+|\xi_1|+|n_1|)^2.
  \end{aligned}
  \end{equation}
  In particular, if $|\xi_1| = |n_1| = 0$, choose $C<C_1$. Hence, \eqref{eq-5.5} implies $\frac{1}{(1+|\tau_3-\xi_3^2-n_3^2|)^{b-\frac14}}\leq \frac{1}{(C'(1+|\xi_1|+|n_1|)^2)^{b-\frac14}}$. By the formula \eqref{eq-3.8}, we get
  \begin{equation}
      \begin{aligned}\label{eq-4.15}
      \eqref{eq-5.2} \lesssim & \|u\|_{X^{s-2(b-\frac14),b_3}_{I_j}}\|u\|_{X^{s_0,b_3}_{I_j}}\|u\|_{X^{s_0,b_3+(b-\frac14)}_{I_j}} \lesssim \|u\|_{X^{s-2(b-\frac14),b}_{I_j}}\|u\|_{X^{1,b}_{I_j}}^2\\
      \lesssim & \|u\|_{X^{s,b}_{I_j}}^{1-\frac{2(b-\frac14)}{s-1}}
      \lesssim  \|u(t_j)\|_{H^s}^{1-\frac{2(b-\frac14)}{s-1}}.
      \end{aligned}
      \end{equation}

  \item[$\mathbf{Subcase ~2}$.]  Now we just need to consider the case of $|\tau_2| \geq C_1(1+|\xi_1|+|n_1|)^2$ and $|\tau_3| \geq C_1(1+|\xi_1|+|n_1|)^2$.
       Denote
       $$
       (1+|\xi_1|+|n_1|)^s(1+|\tau_1+\xi_1^2+n_1^2|)^{b}|\bar{\hat{u}}(-\tau_1,-\xi_1,-n_1)| = \tilde{u}(-\tau_1,-\xi_1,-n_1),
       $$
       Rewrite \eqref{eq-5.2} as
       \begin{equation}
       \begin{aligned}\label{eq-5.7}
       & \sup_{\|\hat{w}\|_{L^2}=1}\sum\int\int\frac{|\bar{\hat{w}}(\tau,\xi,n)|}{(1+|\tau-\xi^2-n^2|)^b}
       \frac{|\tilde{u}(-\tau_1,-\xi_1,-n_1)|}{(1+|\tau_1+\xi_1^2+n_1^2|)^b}\\
       & \qquad\qquad\times |\hat{u}(\tau_2,\xi_2,n_2)||\hat{u}(\tau_3,\xi_3,n_3)|{\rm d}\tau_1{\rm d}\xi_1{\rm d}\tau_2{\rm d}\xi_2{\rm d}\tau_3{\rm d}\xi_3,
       \end{aligned}
       \end{equation}
       where $\tau = \tau_1+\tau_2+\tau_3, \xi = \xi_1+\xi_2+\xi_3$ and $n = n_1+n_2+n_3$. Since
       \begin{equation}
       \begin{aligned}\label{eq-5.8}
       1+|\tau-\xi^2-n^2| + 1+|\tau_1+\xi_1^2+n_1^2| \geq & 1+|\xi_1^2+n_1^2|+1+|\xi^2+n^2|-|\tau-\tau_1|\\
       \geq & (1+|\xi_1|+|n_1|)^2 - 2C_1(1+|\xi_1|+|n_1|)^2\\
       \geq & C''(1+|\xi_1|+|n_1|)^2
       \end{aligned}
       \end{equation}
       for $C_1 <\frac12$.
       And rewrite \eqref{eq-5.7} as
       \begin{equation}
       \begin{aligned}\label{eq-5.9}
       & \sup_{\|\hat{w}\|_{L^2}=1}\sum\int\int\frac{|\bar{\hat{w}}(\tau,\xi,n)|}
       {(1+|\tau-\xi^2-n^2|)^{\frac14}}(1+|\tau-\xi^2-n^2|)^{\frac14-b}
       \frac{|\tilde{u}(-\tau_1,-\xi_1,-n_1)|}{(1+|\tau_1+\xi_1^2+n_1^2|)^{\frac14}}\\
       & \qquad\qquad \times (1+|\tau_1+\xi_1^2+n_1^2|)^{\frac14-b}|\hat{u}(\tau_2,\xi_2,n_2)||\hat{u}(\tau_3,\xi_3,n_3)|{\rm d}\tau_1{\rm d}\xi_1{\rm d}\tau_2{\rm d}\xi_2{\rm d}\tau_3{\rm d}\xi_3,
       \end{aligned}
       \end{equation}
       where $\tau = \tau_1+\tau_2+\tau_3, \xi = \xi_1+\xi_2+\xi_3$ and $n = n_1+n_2+n_3$.
       Using \eqref{eq-5.8}, we have
       $$
       (1+|\tau-\xi^2-n^2|)^{\frac14-b}(1+|\tau_1+\xi_1^2+n_1^2|)^{\frac14-b} \lesssim (1+|\xi_1|+|n_1|)^{2(\frac14-b)}
       $$
       for $b_3<\frac14<\frac12<b$. This yields
       \begin{equation}
      \begin{aligned}\label{eq-4.19}
      \eqref{eq-5.9} \lesssim & \|u\|_{X^{s-2(b-\frac14),b}_{I_j}}\|u\|_{X^{s_0,b_3}_{I_j}}^2 \lesssim \|u\|_{X^{s-2(b-\frac14),b}_{I_j}}\|u\|_{X^{1,b}_{I_j}}^2\\
      \lesssim & \|u\|_{X^{s,b}_{I_j}}^{1-\frac{2(b-\frac14)}{s-1}}
      \lesssim  \|u(t_j)\|_{H^s}^{1-\frac{2(b-\frac14)}{s-1}}.
      \end{aligned}
      \end{equation}
\end{enumerate}
Combining \eqref{eq-4.13}, \eqref{eq-4.19} and \eqref{eq-4.15}, \eqref{eq-4.14} is reduced to
\begin{equation}\label{eq-4.20}
II \lesssim \|u(t_j)\|_{H^s}\|u(t_j)\|_{H^s}^{1-\frac{\eta}{s-1}},
\end{equation}
where $\eta$ satisfying
\begin{equation*}
\eta = \min\{ \theta(1-s_0), 2(b-\frac14) \}.
\end{equation*}
To equivalent this two terms, we have
$
\theta = \frac{2(b-\frac14)}{1-s_0}<1
$
for $0<s_0<\frac14$ and $\frac14<\frac12<b$.

Comparing \eqref{eq-4.20} with the result of $III$, there is $\gamma=\frac12-$ such that
\begin{equation}
\eqref{eq-4.3} \lesssim \|u(t_j)\|_{H^s}\|u(t_j)\|_{H^s}^{1-\frac{\gamma}{s-1}}.
\end{equation}
Therefore, we conclude the proof of the claim \eqref{eq-4.1}.

\noindent \textbf{Acknowledgments.}
We highly appreciate Prof. Jiqiang Zheng and Prof. Qionglei Chen for helpful discussions and Fanfei Meng, Ying Wang for beneficial suggestions on this paper.

K. Yang was supported by a Doctoral Fundation of Chongqing Normal University and Chongqing Science and Technology Commission (21XLB025,ncamc2022-msxm04),  and a funding(6142A0521Q06,HX02021-36)from Laboratory  of  Computational  Physics, Institute of Applied Physics and Computational Mathematics in Beijing.

\end{document}